\newcommand{\diag}{\operatorname{diag}}
\DeclareMathOperator{\diff}{d}
\newcommand{\ds}{{\diff}s}
\newcommand{\dt}{{\diff}t}
\newcommand{\dw}{{\diff}w}
\newcommand{\expn}{\operatorname{e}}
\newcommand{\spaned}{\operatorname{span}}
\newcommand{\kernel}{\operatorname{ker}}
\newcommand{\vect}{\operatorname{vec}}
\newcommand{\im}{\operatorname{im}}
\newcommand {\smat}      [1] {\left(\begin{smallmatrix}{#1}}
\newcommand {\srix}          {\end{smallmatrix}\right)}
\newcommand {\s}      [1] {\begin{smallmatrix}{#1}}
\newcommand {\se}          {\end{smallmatrix}}
\newcommand{\trace}{\operatorname{tr}}
\newtheorem{defn}{Definition}[section]
\newtheorem{prop}[defn]{Proposition}  
\newtheorem{thm}[defn]{Theorem}
\newtheorem{remark}[defn]{Remark}
\title{Gramian-based model reduction for unstable stochastic systems}
\author{
	Martin Redmann\thanks{Martin Luther University Halle-Wittenberg, Institute of Mathematics, Theodor-Lieser-Str. 5, 06120 Halle (Saale), Germany (Emails: {\tt 
			martin.redmann@mathematik.uni-halle.de, Nahid.Jamshidi@mathematik.uni-halle.de}), 
			}\and Nahid Jamshidi$^*$
}
\begin{document}
	
	\maketitle
	
	\begin{abstract}
This paper considers large-scale linear stochastic systems representing, e.g., spatially discretized stochastic partial differential equations. Since asymptotic stability can often not be ensured in such a stochastic setting (e.g. due to larger noise), the main focus is on establishing model order reduction (MOR) schemes applicable to unstable systems. MOR is vital to reduce the dimension of the problem in order to lower the enormous computational complexity of for instance sampling methods in high dimensions. In particular, a new type of Gramian-based MOR approach is proposed in this paper that can be used in very general settings. The considered Gramians are constructed to identify dominant subspaces of the stochastic system as pointed out in this work. Moreover, they can be computed via Lyapunov equations. However, covariance information of the underlying systems enters these equations which is not directly available. Therefore, efficient sampling based methods relying on variance reduction techniques are established to derive the required covariances and hence the Gramians. Alternatively, an ansatz to compute the Gramians by deterministic approximations of covariance functions is investigated. An error bound for the studied MOR methods is proved yielding an a-priori criterion for the choice of the reduced system dimension. This bound is new and beneficial even in the deterministic case. The paper is concluded by numerical experiments showing the efficiency of the proposed MOR schemes.
	\end{abstract}

	

	\section{Introduction}\label{stochstabgen}
	
	Let $w=\left(w_1, \ldots, w_q\right)^\top$ be an $\mathbb R^q$-valued mean zero Wiener process with covariance matrix $\mathbf K=(k_{ij})$, i.e., $\mathbb E[w(t)w^\top(t)]=\mathbf K t$ for $t\in [0, T]$, 
	where $T>0$ is the terminal time.
	Suppose that $W$ and all stochastic process appearing in this paper are defined on a filtered probability space  $\left(\Omega, \mathcal F, (\mathcal F_t)_{t\in [0, T]}, 
	\mathbb P\right)$\footnote{$(\mathcal F_t)_{t\in [0, T]}$ shall be right continuous and complete.}. In addition, we assume $w$ to be $(\mathcal F_t)_{t\in [0, T]}$-adapted and the 
	increments $w(t+h)-w(t)$ to be independent of $\mathcal F_t$ for $t, h\geq 0$. We consider the following large-scale controlled linear stochastic differential equation
	\begin{subequations}\label{original_system}
		\begin{align}\label{stochstatenew}
			dx(t)&=[Ax(t) + B u(t)] dt+\sum_{i=1}^q N_i x(t) dw_i(t),\quad x(0)=x_0,\\ \label{output}
			y(t)&= C x(t), \quad t\in[0, T],
		\end{align}
	\end{subequations}
	where $A, N_i\in \mathbb R^{n\times n}$, $B\in \mathbb R^{n\times m}$ and $C\in \mathbb R^{p\times n}$. The state dimension $n$ is assumed to be large and the quantity of interest $y$ is often low-dimensional, 
	i.e., $p\ll n$, but we also discuss the case of a large $p$. By $x(t; x_0, u)$, we denote the state in dependence on the initial state $x_0$ and the control $u$, for which we assume that it is $(\mathcal F_t)_{t\in [0, T]}$-adapted and $\left\|u\right\|_{L^2_T}^2:=\mathbb E\int_0^T \left\|u(s)\right\|^2_2 ds<\infty$ with $\left\|\cdot\right\|_2$ representing the Euclidean norm.\smallskip
	
	The goal is to construct a system with state $\bar x$ and quantity of interest $\bar y$ having the same structure as \eqref{original_system} but a much smaller state dimension $r\ll n$. At the same time, it is aimed to  ensure $y \approx \bar y$. Such a reduced order model (ROM) is particularly beneficial if many evaluations \eqref{original_system} for several controls $u$ are required (e.g. in an optimal control problem) combined with need of generating many samples of $y$ for each individual $u$. Now, a ROM shall be achieved under very general conditions such as the absence of mean square asymptotic stability, i.e., $\mathbb E\left\|x(t; x_0, 0)\right\|_2^2\rightarrow 0$ (as $t\rightarrow \infty$) is not given. Methods involving such a stability condition are intensively studied in the literature \cite{beckerhartmann, redmannbenner, redmannspa2, redmannigor} since it is often guaranteed if \eqref{stochstatenew} results from a spatial discretization of a stochastic partial differential equation (SPDE) such as \begin{align}\label{heateq}
			\frac{\partial {\mathcal X}(t,\zeta)}{\partial t}=\Delta \mathcal X(t,\zeta)+\mathcal B u(t)+\sum_{i=1}^q\mathcal N_i \mathcal X(t,\zeta)\frac{\partial w_i(t)}{\partial t}.                                                                                                                                                                                                              \end{align}
We refer to \cite{dapratozab} for more details on the theory of such equations. 
The solution ${\mathcal X}(t, \cdot)$ to the heat equation \eqref{heateq} is viewed as a stochastic process taking values in a Hilbert space and shall be approximated by $x$. In this context, $A$ can be seen as a discretized version of the Laplacian $\Delta$ and $B$, $N_i$ represent discretizations of the linear bounded operators $\mathcal B$, $\mathcal N_i$. Moreover, $w_i$ can be interpreted as Fourier coefficients corresponding to a truncated series of space-time noise. Further explanations on different schemes for a spatial discretization can, e.g., be found in \cite{Barth, galerkinhaus}. However, even in a setting like in \eqref{heateq}, mean square asymptotic stability can be violated since the noise can easily cause instabilities (e.g. if it is sufficiently large).\smallskip

Such a scenario is of interest in this paper. We establish generalizations of balancing related model order reduction (MOR) schemes in order to make them applicable to general systems \eqref{original_system}. These MOR methods rely on matrices called Gramians that can be used to identify  the dominant subspaces of \eqref{original_system}. Based on this characterization of the relevance of different state directions, less important information in the dynamics is removed leading to the desired ROM. This step can be interpreted as an optimization procedure applied to  spatially discretized SPDE. In an unstable setting, Gramians need to be defined that generally exist in contrast to previous approaches. We consider generalized time-limited Gramians in this work. Such type of Gramians have been used in deterministic frameworks \cite{morGawJ90, kurschner2018balanced, redmannkuerschner}. Although such an ansatz is beneficial for the setting we want to cover, the analysis of MOR methods based on generalized time-limited Gramians is much more challenging. Furthermore, the question of how to compute these Gramians in practice is very difficult but vital since they are required to derive the ROM. \smallskip

In this paper, we introduce time-limited Gramian in the stochastic setting studied here. We point out the relation between these Gramians and the dominant subspaces of \eqref{original_system} and show their relation to matrix (differential) equations. Subsequently, we discuss two different MOR techniques based on these Gramians and analyze the respective error. In particular, an error bound is established that allows us to identify situations in which the approaches work well. It is important to mention that this bound is more than just a generalization of the deterministic case \cite{redmannkuerschner}. The new type of representation links the truncated Hankel singular values of the system or the truncated eigenvalues of the reachability Gramian, respectively, to the error of the approximation without needing asymptotic stability and is hence beneficial also in unstable settings. Moreover, we discuss different strategies that can be used to compute the proposed Gramians. They are solutions to Lyapunov equations. However, in a time-limited scenario, covariance information at the terminal time enters these Lyapunov equations which is not immediately available. Since direct methods only work in moderate high dimensions, we focus on sampling based approaches to estimate the required covariances. In order to increase the efficiency of such procedures we apply variance reduction methods in this context leading to an efficient way of solving for the time-limited Gramians. Apart from this empirical procedure, a second strategy to approximate covariance functions and hence the Gramians is investigated, where potentially expensive sampling is not required. The paper is concluded by several numerical experiments showing the efficiency of the MOR methods.

	\section{Gramian-based MOR}\label{sec_mor}
	
	\subsection{Gramians and characterization of dominant subspaces}\label{sec:gram}
	Identifying the effective dimensionality of system \eqref{original_system} requires the study of 
	the fundamental solution to the homogeneous stochastic state equation. It is defined as the matrix valued stochastic process $\Phi$ solving
	\begin{align}\label{funddef}
		\Phi(t)=I+\int_0^t A \Phi(s) ds+\sum_{i=1}^q \int_0^t  N_i \Phi(s)dw_i(s), \quad t\in [0, T],
	\end{align}
	where $I$ denotes the identity matrix. Multiplying \eqref{funddef} with $x_0$ from the right, we obtain the solution to \eqref{stochstatenew} if $u\equiv 0$. Based on $\Phi$ we define two Gramians by
	\begin{align}\label{reach_gram}
		P_{T}:=&\mathbb E\int_0^{T} \Phi(s)BB^\top \Phi^\top(s) ds\\
		Q_{T}:=&\mathbb E\int_0^{T} \Phi^\top(s)C^\top C \Phi(s) ds,
	\end{align}
	where $P_T$ and $Q_T$ are supposed to identify the less relevant states in \eqref{stochstatenew} and \eqref{output}, respectively. $P_T$ and $Q_T$ can be viewed as generalizations of deterministic time-limited Gramians which are obtained by setting $N_i=0$ for all $i=1, \ldots, q$ resulting in $\Phi(t)= \expn^{At}$. MOR schemes based on such Gramians in a deterministic framework are investigated, e.g., in \cite{morGawJ90, kurschner2018balanced, redmannkuerschner}. $P_T$ and $Q_T$ generally exist in contrast to their limits $\lim_{T\rightarrow \infty} P_{T}$ and $\lim_{T\rightarrow \infty}Q_{T}$ which require mean square asymptotic stability. MOR methods based on these limits are, e.g., considered in \cite{beckerhartmann, redmannbenner, redmannspa2, redmannigor} and are already analyzed in detail. However, the necessary stability condition is often not satisfied in practice. \smallskip
	
	Let us briefly sketch the relation between $P_T$ and dominant subspaces in \eqref{stochstatenew} for the case of zero initial data. Suppose that  $(p_{k})_{k=1,\ldots, n}$ is an orthonormal basis of $\mathbb R^n$ consisting of eigenvectors of $P_T$. We can then write the state as \begin{align*}
		x(t; 0, u)=\sum_{k=1}^n  \left\langle x(t; 0, u), p_{k} \right\rangle_2 p_{k}.  \end{align*}
	Given $x_0=0$, the expansion coefficient can be bound from above as follows \begin{align}\label{diffreachjanein}
		\sup_{t\in[0, T]}\mathbb E \left\vert\langle x(t, 0, u), p_{k}  \rangle_2\right\vert \leq  \sqrt{\lambda_{k}}\left\|u\right\|_{L^2_T},
	\end{align}
	see \cite[Section 3]{redmannspa2}, 
	where $\lambda_{k}$ is the eigenvalue corresponding to $p_k$. If $\lambda_{k}$ is small, the same is true for $\langle x(\cdot, 0, u), p_{k}  \rangle_2$ and hence $p_k$ is a less relevant direction that can be neglected. 
	This implies that the eigenspaces of $P_T$ belonging to the small eigenvalues can be removed from the system. On the other hand, we aim to find state directions that have a low impact on the quantity of interest $y$. We therefore look at the initial state $x_0$ since it determines the dynamics of the state variable. We expand \begin{align*}
		x_0=\sum_{k=1}^n  \left\langle x_0, q_{k} \right\rangle_2 q_{k}, \end{align*}
	where $(q_{k})_{k=1,\ldots, n}$ is an orthonormal basis of eigenvectors of $Q_T$ with associated eigenvalues $(\mu_{k})_{k=1,\ldots, n}$. Using the solution representation of the state variable, we obtain \begin{align*}	                                                                                                                                                                                                              
     y(t; x_0, u)  &= C\Phi(t) x_0 + C\int_0^t \Phi(t, s) B u(s) ds\\  
     &=  \sum_{k=1}^n  \left\langle x_0, q_{k} \right\rangle_2 C\Phi(t)q_{k} + C\int_0^t \Phi(t, s) B u(s) ds\end{align*}
with $t\in[0, T]$ and $ \Phi(t, s):=\Phi(t)\Phi^{-1}(s)$. Consequently, neglecting $q_k$ has a low impact on $y$ if  $C\Phi(\cdot)q_{k}$ is small on $[0, T]$. It now follows that 
	\begin{align}\label{outenergiewithremainder}
		\mathbb{E} \int_0^T\left\| C\Phi(t)q_{k}\right\|_2^2 dt = q_k^\top Q_T q_k= \mu_k,   
	\end{align}
	telling us that the eigenspaces of $Q_T$ are unimportant for which the associated eigenvalues $\mu_k$ are small. Knowing both the less relevant state directions in \eqref{stochstatenew} and \eqref{output} from \eqref{diffreachjanein} and \eqref{outenergiewithremainder} it is aimed to remove them. This can be done by diagonalizing $P_T$ such that less important variables in \eqref{stochstatenew} can be easily identified and truncated. Another, but computationally more expensive, approach is based on simultaneously diagonalizing $P_T$ and $Q_T$ which allows to remove more redundant information from the system. Both strategies are discussed in Section \ref{sec_diagonaling}. \smallskip
	
	Below, we point out the relation between the Gramians and linear matrix differential equations. To do so, we introduce two operators $\mathcal L_A(X)= A X+X A^\top$ and $\Pi(X) = \sum_{i, j=1}^q N_i X N_j^\top k_{ij}$ on the space of symmetric matrices endowed with the Frobenius inner product $\langle \cdot, \cdot \rangle_F$. $\mathcal L_A$ is a Lyapunov operator and $\Pi$ is positive in the sense that $\Pi(X)$ is a positive semidefinite matrix if $X$ is positive semidefinite. The corresponding adjoint operators are $\mathcal L_A^*(X)= A^\top X+X A$ and $\Pi^*(X) = \sum_{i, j=1}^q N_i^\top X N_j k_{ij}$.
	
	The equations related to $P_T$ and $Q_T$ will be helpful to compute these Gramians that are needed in order to derive the reduced system. By Ito's product rule \cite{oksendal}, we can show that $F(t) = \mathbb E [\Phi(t)BB^\top \Phi^\top(t)]$, $t\in [0, T]$, solves
	\begin{align}\label{matrixequalforF}
		{\dot F}(t) = \mathcal L_A\left(F(t)\right)+ \Pi\left(F(t)\right),\quad F(0)=BB^\top.
	\end{align}
	Integrating both sides of (\ref{matrixequalforF}) yields 
\begin{align}\label{comPT}
		F(T)-BB^\top = 
		\mathcal L_A\left(P_T\right) + \Pi\left(P_T\right),
	\end{align}
	see \cite{damm, redmannspa2, mor_heston}.
	\begin{remark}\label{rem_vect}
The generalized Lyapunov operator $\mathcal L_A + \Pi$ is linked to the Kronecker matrix \begin{align}\label{stochstab}
		\mathcal K= A\otimes I+I\otimes A+\sum_{i, j=1}^q N_i\otimes N_j k_{ij},\end{align}
where $\cdot\otimes \cdot$ is the Kronecker product between two matrices. Let $\vect(\cdot)$ be the vectorization of a matrix. Then, it holds that 
$\vect\left((\mathcal L_A + \Pi)\left(X\right)\right)= \mathcal K \vect(X)$.
\end{remark}

	The link between $Q_T$ and the corresponding matrix equation is established in a different way. We formulate this result in the following proposition.
	\begin{prop}\label{matrix_ode_Q}
	 Let $C^\top C$ be contained in the eigenspace of the Lyapunov operator $\mathcal L_A^* + \Pi^*$. Then, $G(t) = \mathbb E [\Phi^\top(t)C^\top C \Phi(t)]$, $t\in [0, T]$, satisfies
	 \begin{align}\label{ODEforG}
		\dot G(t)= \mathcal L_A^*\left(G(t)\right) + \Pi^*\left(G(t)\right), \quad G(0)=C^\top C.
	\end{align}
	\end{prop}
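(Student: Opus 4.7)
The plan is to apply Itô's product rule to $\Phi^\top(t)\,C^\top C\,\Phi(t)$, take expectations, and then invoke the eigenspace hypothesis to collapse the resulting drift into something proportional to $G(t)$. Since $\Phi$ satisfies (\ref{funddef}), one has $d\Phi^\top(t) = \Phi^\top(t) A^\top \dt + \sum_{i=1}^{q} \Phi^\top(t) N_i^\top dw_i(t)$. Writing $\Phi^\top C^\top C \Phi$ as the product of $\Phi^\top C^\top C$ and $\Phi$, the Itô product rule together with $dw_i\, dw_j = k_{ij}\, dt$ gives a drift
\[
\Phi^\top A^\top C^\top C\, \Phi + \Phi^\top C^\top C A\, \Phi + \sum_{i,j=1}^{q} \Phi^\top N_i^\top C^\top C N_j\, \Phi\, k_{ij} = \Phi^\top\bigl(\mathcal L_A^* + \Pi^*\bigr)(C^\top C)\,\Phi,
\]
while the $dw_i$-terms form a mean-zero local martingale (moment bounds on $\Phi$ on $[0,T]$ make it a true martingale). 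Taking expectation and differentiating therefore yields
\[
\dot G(t) = \mathbb E\!\left[\Phi^\top(t)\bigl(\mathcal L_A^* + \Pi^*\bigr)(C^\top C)\,\Phi(t)\right].
\]

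The main obstacle is that, in contrast to the derivation of (\ref{matrixequalforF}) for $F$, the matrices $A^\top$ and $N_i^\top$ appear sandwiched between $\Phi^\top$ and $C^\top C\,\Phi$ rather than on the outside, and hence cannot be pulled out of the expectation. The eigenspace assumption is precisely what removes this obstruction: if $(\mathcal L_A^* + \Pi^*)(C^\top C) = \lambda\,C^\top C$ for some $\lambda\in\mathbb R$, then the sandwiched matrix is merely a scalar multiple of the constant matrix $C^\top C$, so it factors out to give
\[
\dot G(t) = \lambda\,\mathbb E\!\left[\Phi^\top(t)\,C^\top C\,\Phi(t)\right] = \lambda\,G(t).
\]

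Combined with $G(0) = C^\top C$, this forces $G(t) = \expn^{\lambda t} C^\top C$. By linearity of $\mathcal L_A^* + \Pi^*$, we then have
\[
\bigl(\mathcal L_A^* + \Pi^*\bigr)(G(t)) = \expn^{\lambda t}\bigl(\mathcal L_A^* + \Pi^*\bigr)(C^\top C) = \lambda\,G(t) = \dot G(t),
\]
which is exactly (\ref{ODEforG}). The only steps requiring mild care are the zero-mean property of the stochastic-integral part (standard under the moment bounds on $\Phi$ up to time $T$) and differentiability of $G$, which follows from continuity of the integrand; the substantive content is the algebraic observation that the eigenspace assumption turns the sandwiched operator action into a scalar multiplier.
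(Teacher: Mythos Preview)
Your It\^o computation is correct and is exactly what the paper does. The gap is in how you read the hypothesis. You take ``$C^\top C$ is contained in the eigenspace of $\mathcal L_A^*+\Pi^*$'' to mean that $C^\top C$ is itself an eigenvector, i.e.\ $(\mathcal L_A^*+\Pi^*)(C^\top C)=\lambda\,C^\top C$ for a single scalar $\lambda$. The paper, however, intends the weaker condition that $C^\top C$ lies in the \emph{span} of the eigenvectors, i.e.\ $C^\top C=\sum_{k}\alpha_k\mathcal V_k$ with $(\mathcal L_A^*+\Pi^*)(\mathcal V_k)=\beta_k\mathcal V_k$ and possibly distinct $\beta_k$ (this reading is confirmed by Remark~\ref{remcomQ}, where the assumption is said to hold whenever $\mathcal K$ is diagonalizable). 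Under the intended hypothesis the matrix $(\mathcal L_A^*+\Pi^*)(C^\top C)$ is in general \emph{not} a scalar multiple of $C^\top C$, so your step ``$\dot G(t)=\lambda\,G(t)$'' does not go through.

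The fix is the one the paper uses: run your It\^o argument on each eigenvector $\mathcal V_k$ separately to obtain $\mathbb E[\Phi^\top(t)\mathcal V_k\Phi(t)]=\expn^{\beta_k t}\mathcal V_k$, then superpose to get $G(t)=\sum_k\alpha_k\expn^{\beta_k t}\mathcal V_k$, from which $\dot G(t)=\sum_k\alpha_k\expn^{\beta_k t}\beta_k\mathcal V_k=(\mathcal L_A^*+\Pi^*)(G(t))$ follows by linearity. (Note also that the eigenvalues $\beta_k$ need not be real; the paper allows $\alpha_k,\beta_k\in\mathbb C$.) So the core mechanism you identified is right, but it has to be applied eigenvector by eigenvector rather than to $C^\top C$ in one shot.
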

	\begin{proof}
	 Since $C^\top C$ is contained in the eigenspace of the Lyapunov operator, there exist $\alpha_1, \dots, \alpha_{n^2}\in\mathbb C$ such that $C^\top C = \sum_{k=1}^{n^2} \alpha_k \mathcal V_k$, where $(\mathcal V_k)$ are eigenvectors of $\mathcal L_A^* + \Pi^*$ corresponding to the eigenvalues $(\beta_k)$. Then, we have $\mathbb E [\Phi^\top(t)C^\top C \Phi(t)]= \sum_{k=1}^{n^2} \alpha_k \mathbb E [\Phi^\top(t)\mathcal V_k\Phi(t)]$. Let us apply Ito's product rule, see \cite{oksendal}, to $\Phi^\top(t)\mathcal V_k\Phi(t)$ resulting in \begin{align*} 	                                                                                                                                                                                                                                                                                                                                                                                                                                                                                                                                      
  d\left(\Phi^\top(t)\mathcal V_k\Phi(t)\right) =   d\left(\Phi^\top(t)\right)\mathcal V_k\Phi(t) + \Phi^\top(t)\mathcal V_k d\left(\Phi(t)\right) +   d\left(\Phi^\top(t)\right)\mathcal V_k d\left(\Phi(t)\right).                                                                                                                                                                                                                                       \end{align*}
We insert the stochastic differential of $\Phi$ above, compare with \eqref{funddef}, leading to \begin{align*} 	                                                                                                                                                                                                                                                                                                                                                                                                                                                                                                                                      
  d\left(\Phi^\top(t)\mathcal V_k\Phi(t)\right) =&   \left(\Phi^\top(t)A^\top dt+\sum_{i=1}^q \Phi^\top(t)N_i^\top dw_i(t)\right)\mathcal V_k\Phi(t) \\
  &+ \Phi^\top(t)\mathcal V_k \left(A\Phi(t) dt+\sum_{i=1}^q N_i\Phi(t) dw_i(t)\right) +   \Phi^\top(t) \sum_{i, j=1}^q  N_i^\top \mathcal V_k N_j k_{ij}\Phi(t) dt\\
  =&  \;\Phi^\top(t)\left(A^\top \mathcal V_k + \mathcal V_k A +\sum_{i, j=1}^q  N_i^\top \mathcal V_k N_j k_{ij}\right)\Phi(t) dt\\
  &+\sum_{i=1}^q \Phi^\top(t)\left(N_i^\top \mathcal V_k +\mathcal V_k  N_i\right) \Phi(t)dw_i(t).
  \end{align*}
  We apply the expected value to both sides of the above identity and exploit that Ito integrals have mean zero (see e.g. \cite{oksendal}). Hence, we obtain \begin{align*} 	                                                                                                                                                                                                                                                                                                                                                                                                                                                                                                                                      
  \frac{d}{dt}\mathbb E [\Phi^\top(t)\mathcal V_k\Phi(t)]= \mathbb E [\Phi^\top(t)(\mathcal L_A^*+ \Pi^*)(\mathcal V_k)\Phi(t)] = \beta_k \mathbb E [\Phi^\top(t)\mathcal V_k\Phi(t)].
  \end{align*}
This implies that $\mathbb E [\Phi^\top(t)\mathcal V_k\Phi(t)] = \expn^{\beta_k t}\mathcal V_k$ providing $\mathbb E [\Phi^\top(t)C^\top C \Phi(t)]= \sum_{k=1}^{n^2} \alpha_k \expn^{\beta_k t}\mathcal V_k$. Consequently, we have 
\begin{align*} 	                                                                                                                                                                                                                                                                                                                                                                                                                                                                                                                                      
  \frac{d}{dt}\mathbb E [\Phi^\top(t) C^\top C \Phi(t)]&=  \sum_{k=1}^{n^2} \alpha_k \expn^{\beta_k t}\beta_k \mathcal V_k=  \sum_{i=1}^{n^2} \alpha_k \expn^{\beta_k t}(\mathcal L_A^*+ \Pi^*)(\mathcal V_k)\\
  &= (\mathcal L_A^*+ \Pi^*)(\mathbb E [\Phi^\top(t) C^\top C \Phi(t)])
\end{align*}
using the linearity of $\mathcal L_A^*+ \Pi^*$. This concludes the proof.
  \end{proof}
\begin{remark}\label{remcomQ}
The assumption of Proposition \ref{matrix_ode_Q} is always true if $\mathcal K$ is diagonalizable over $\mathbb C$ because in that case there is a basis of $\mathbb C^{n^2}$ consisting of eigenvectors of $\mathcal K^\top$. Hence, $\vect(C^\top C)$ can be spanned by these eigenvectors which are of the form $\vect(\mathcal V_k)$ with $\mathcal V_k$ being an eigenvector of $\mathcal L_A^*+ \Pi^*$ providing that $C^\top C$ is in the eigenspaces of this operator. Therefore, from the computational point of view, the assumption of Proposition \ref{matrix_ode_Q} does not restrict the generality since the set of diagonalizable $n^2\times n^2$ matrices is dense in $\mathbb C^{n^2\times n^2}$.\\
In fact, we can find a stochastic representation of the solution to \eqref{ODEforG} different from $\mathbb E [\Phi^\top(t)C^\top C \Phi(t)]$, $t\in [0, T]$. Introducing the fundamental solution $\Phi_d$ by the equation $\Phi_d(t)=I+\int_0^t A^\top \Phi_d(s) ds+\sum_{i=1}^q \int_0^t  N_i^\top \Phi_d(s)dw_i(s)$, we see that $G(t) = \mathbb E [\Phi_d(t)C^\top C \Phi_d^\top(t)]$. This is a direct consequence of the relation between $\mathbb E [\Phi(t)B B^\top \Phi^\top(t)]$ and the solution of \eqref{matrixequalforF} when $(A, B, N_i)$ is replaced by $(A^\top, C^\top, N_i^\top)$. Therefore, $\mathbb E [\Phi_d(t)C^\top C \Phi_d^\top(t)]$, $t\in [0, T]$, solves \eqref{ODEforG} and hence coincides with $\mathbb E [\Phi^\top(t)C^\top C \Phi(t)]$, $t\in [0, T]$, given the assumption of Proposition \ref{matrix_ode_Q}.\\
Generally, we have $\Phi_d(t)\neq \Phi^\top(t)$. In case all matrices $A, N_1, \ldots, N_q$ commute, we know that $A$ and $N_i$ commute with $\Phi$ (see, e.g., \cite{mor_heston}). Hence, $\Phi_d(t)= \Phi^\top(t)$ which can be seen be transposing \eqref{funddef} and subsequently exploiting the commutative property. This is particularly given in the deterministic case where $N_i=0$ for all $i=1, \ldots, q$.
\end{remark}
Under the assumption of Proposition \ref{matrix_ode_Q}, it holds that 
\begin{align}\label{gengenlyapobs}
		G(T)-C^\top C = \mathcal L^*_A\left(Q_T\right) + \Pi^*\left(Q_T\right), 
	\end{align} 
	exploiting \eqref{ODEforG}. In fact, we need to compute $P_T$ and $Q_T$ within the MOR procedure described later.  Lyapunov equations \eqref{comPT} and \eqref{gengenlyapobs} are used to do so. However, one needs to have access to $F(T)$ and $G(T)$ which are the terminal values of the matrix-differential equations \eqref{matrixequalforF} and \eqref{ODEforG}. This is indeed very challenging in a framework, where $n\gg 100$. We will address possible approaches for computing $P_T$ and $Q_T$ for such settings in Section \ref{sec_compGram}.

	\subsection{Reduced order modeling by transformation of Gramians}\label{sec_diagonaling}
	
In this work, we address MOR techniques that rely on a change of basis. In particular, one seeks for a suitable regular matrix $S$ that defines $x_S(t) = S x(t)$. Inserting this into \eqref{original_system} yields
		\begin{align}\label{stochstate_trans}
			dx_S(t) = [A_S x_S(t) + B_S u(t)] dt+\sum_{i=1}^q N_{i, S} x_S(t) dw_i(t),\quad
			y(t) = C_S x_S(t), \quad t\in[0, T], 
		\end{align}
where $(A_S, B_S, C_S, N_{i, S}) = (SAS^{-1},SB, CS^{-1}, SN_iS^{-1})$.
System \eqref{stochstate_trans} has the same input-output behavior as \eqref{original_system} but the fundamental solution and hence the Gramians are different. The fundamental solution of \eqref{stochstate_trans} is $\Phi_S(t) = S \Phi(t) S^{-1}$ which can be observed by multiplying \eqref{funddef} with $S$ from the left and with $S^{-1}$ from the right. Consequently, the new Gramians are \begin{align*}                                                                                                                                                                                                                                      
                                                                                                                                                                                                                                                                                                                                                                                                                                                                                                                                                                                                                                                           P_{T, S}=\mathbb E\int_0^{T} \Phi_S(s)B_SB_S^\top \Phi_S^\top(s) ds = S P_T S^\top,\quad 
		Q_{T, S}=\mathbb E\int_0^{T} \Phi_S^\top(s)C_S^\top C_S \Phi_S(s) ds =S^{-\top} Q_T S^{-1}. \end{align*}
The idea is to diagonalize at least one of these Gramians, since in a system with diagonal Gramians, the orthonormal bases $(p_k)$ and $(q_k)$ are canonical unit vectors (columns of the identity matrix). Thus, unimportant directions can be identified easily by \eqref{diffreachjanein} and \eqref{outenergiewithremainder} and are associated to the small diagonal entries of the new Gramians. For the first approach, we set $S=S_1$, where $S_1$ is part of the eigenvalue decomposition $ P_T= S_1^\top \Sigma^{(1)}_T S_1$. This leads to $P_{T, S}=\Sigma^{(1)}_T$ with $\Sigma^{(1)}_T$ being the diagonal matrix of eigenvalues of $P_T$. Notice that $S^\top = S^{-1}$ holds in this case. If \eqref{stochstatenew} is mean square asymptotically stable, $P_T$ can be replaced by $\lim_{T\rightarrow \infty} P_T$. This method based on the limit is investigated in \cite{redmannigor}.\smallskip

The second approach uses $S=S_2$, which leads to $P_{T, S}=Q_{T, S}=\Sigma^{(2)}_T$, where $\Sigma^{(2)}_T$ is the diagonal matrix of the square roots of eigenvalues of $P_T Q_T$. Those are called Hankel singular values (HSVs). Given $P_T, Q_T>0$, the transformation $S_2$ and its inverse are obtained by
\begin{align}\label{balancedtrans}
 S_2={\Sigma^{(2)}_T}^{-\frac{1}{2}}U^\top L^\top,\quad S_2^{-1}=K V{\Sigma^{(2)}_T}^{-\frac{1}{2}},
\end{align}
where the ingredients of \eqref{balancedtrans} are computed by the factorizations  $P_{T} = K K^\top$, $Q_T=LL^\top$ and the singular value decomposition of $K^\top L = V\Sigma^{(2)}_T U^\top$. The same procedure can be conducted for the limits of the Gramians (as $T\rightarrow \infty$) if mean square asymptotic stability is given \cite{redmannbenner}. However, such a stability condition is generally too restrictive in practice. We introduce the matrix \begin{align}
 \Sigma_T=\diag(\sigma_{T, 1},\ldots,\sigma_{T, n}) =   \Sigma^{(i)}_T, \quad i\in\{1, 2\},                                                                                                                                                                                                                                                                                                                                                                                                                                         \end{align}
as the diagonal matrix of either eigenvalues of $P_T$ or of HSVs of system \eqref{original_system}. For $S=S_1$ or $S=S_2$ the coefficients of \eqref{stochstate_trans} are partitioned as follows \begin{equation}\label{partitions}
\begin{aligned}
 A_S =& \begin{pmatrix}
        A_{11} & A_{12}\\
        A_{21}& A_{22}
       \end{pmatrix}, \quad B_S = \begin{pmatrix}
        B_{1} \\
        B_{2}
       \end{pmatrix},  \quad C_S = \begin{pmatrix}
        C_{1} & C_{2}
       \end{pmatrix},\\
   N_{i, S}=   & \begin{pmatrix}
        N_{i, 11} & N_{i, 12}\\
        N_{i, 21} & N_{i, 22}
       \end{pmatrix}, \quad x_S(t) = \begin{pmatrix}
        x_{1}(t) \\
        x_{2}(t)
       \end{pmatrix}, \quad \Sigma_T = \begin{pmatrix}
        \Sigma_{T, 1} & \\
        & \Sigma_{T, 2}
       \end{pmatrix},
\end{aligned}
\end{equation}
where $x_1(t)\in \mathbb{R}^{r}$, $A_{11}\in \mathbb{R}^{r\times r} $, $ B_1\in \mathbb{R}^{r\times m} $, $ C_1\in \mathbb{R}^{p\times r} $,  $ N_{i,11}\in \mathbb{R}^{r\times r}$ and $\Sigma_{T, 1}\in \mathbb{R}^{r\times r}$ etc. The variables $x_2$ are associated to the matrix $\Sigma_{T, 2}$ of small diagonal entries of $\Sigma_{T}$ and are the less relevant ones. A reduced system is now obtained by truncating the equations of $x_2$ in \eqref{stochstate_trans}. Additionally, we set $x_2\equiv 0$ in the equations for $x_1$ leading to a reduced system 
	\begin{subequations}\label{reduced_system}
		\begin{align}\label{stochstatenew2}
			d\bar{x}(t)&=[{A}_{11}\bar{x}(t) + {B}_1u(t)] \dt+\sum_{i=1}^q {N}_{i,11} \bar{x}(t) \dw_i(t),\quad \bar{x}(0)=\bar{x}_0,\\ \label{output2}
			\bar{y}(t)&= {C}_1\bar{x}(t), \quad t\in[0, T],
		\end{align}
	\end{subequations}
	approximating \eqref{original_system}. Below, we give another interpretation for \eqref{reduced_system}. Let us decompose the transformation \begin{align}\label{defineV}
	S= \begin{pmatrix}
			W^\top \\  \star 
		\end{pmatrix}, \quad
S^{-1}= \begin{pmatrix}
			V &  \star 
		\end{pmatrix}                                                                                                                                                        \end{align}
where $W^\top$ and $V$ are the first $r$ rows and columns of $S$ and $S^{-1}$, respectively. Notice that $W^\top V=I$ and hence $V W^\top$ is a projection. Furthermore, we have $W=V$ if $S=S_1$. Consequently, \eqref{reduced_system} can be seen as a projection-based model with ${A}_{11}= W^\top A V$, $B_1= W^\top B$, $C_1 = CV$ and ${N}_{i,11}= W^\top {N}_{i} V$ which is obtained by the state approximation $x(t) \approx V \bar x(t)$. Inserting this approximation into \eqref{original_system} and subsequently multiplying the state equation with $W^\top$ to enforce the remainder term to be zero then results in \eqref{reduced_system}.

\section{Output Error Bound }

In this section, we prove a bound for the error between \eqref{original_system} and \eqref{reduced_system}. Below, we assume zero initial conditions, i.e., $x_0=0$ and $\bar x_0=0$. We begin with a general bound following the steps of \cite{redmannbenner, redmannspa2}.
The solutions $x(t)$ and $\bar{x}(t)$, $ t\in [0,T] $, to \eqref{original_system} and \eqref{reduced_system} can be expressed using their fundamental matrices $\Phi(t)$ and $\bar{\Phi}(t)$, respectively, see \cite{redmannspa2}. Therefore, we have
\begin{align*}
x(t;0,u)=\int_0^t\Phi(t,s)Bu(s)\ds,\quad 
\bar{x}(t;0,u)=\int_0^t\bar{\Phi}(t,s)B_1u(s)\ds,
\end{align*}
	where $ \Phi(t,s)=\Phi(t)\Phi^{-1}(s) $ and $ \bar{\Phi}(t,s)=\bar{\Phi}(t)\bar{\Phi}^{-1}(s) $. Consequently, representations for the outputs are
	\begin{equation}
\begin{aligned}
			y(t)&=C x(t;0,u)=C\int_0^t\Phi(t,s)Bu(s)\ds,\\
			\bar{y}(t)&=C_1\bar x(t;0,u)=C_1\int_0^t\bar{\Phi}(t,s)B_1u(s)\ds,
\end{aligned}
	\end{equation}
	where $ t\in[0,T] $. Then, we find
	\begin{equation}
\begin{aligned}
			\mathbb{E}\|y(t)-\bar{y}(t)\|_2&=\mathbb{E}\Big\|C\int_0^t\Phi(t,s)Bu(s)\ds-C_1\int_0^t\bar{\Phi}(t,s)B_1u(s)\ds\Big\|_2\\
			&\leq\mathbb{E}\int_0^t\Big\|\left(C\Phi(t,s)B-C_1\bar{\Phi}(t,s)B_1\right)u(s)\Big{\|}_2\ds\\
			&\leq \mathbb{E}\int_0^t\Big\|C\Phi(t,s)B-C_1\bar{\Phi}(t,s)B_1\Big{\|}_F\|u(s)\|_2\ds.
\end{aligned}
	\end{equation}
	Here, $ \|\cdot\|_F $ denotes the Frobenius norm. Using Cauchy’s inequality, it holds that
	\begin{align*}
		\mathbb{E}\|y(t)-\bar{y}(t)\|_2&\leq \left(\mathbb{E}\int_0^t\Big\|C\Phi(t,s)B-C_1\bar{\Phi}(t,s)B_1\Big{\|}_F^2\ds\right)^{\frac{1}{2}}\left(\mathbb{E}\int_0^t\|u(s)\|_2^2 \ds\right)^{\frac{1}{2}}\\
		&=\left(\mathbb{E}\int_0^t\Big\|C^e \Phi^e(t, s) B^e\Big\|_F^2 \ds\right)^{\frac{1}{2}}\left(\mathbb{E}\int_0^t\|u(s)\|_2^2 \ds\right)^{\frac{1}{2}},
	\end{align*}
where $B^e=\smat B\\ B_1\srix$, $C^e = \smat C & -C\srix$ and $\Phi^e= \smat\Phi & 0\\0 &\bar{\Phi}\srix$ is the fundamental solution to the system with coefficients $A^e=\smat A& 0\\0 &A_{11}\srix $ and $N_i^e=\smat N_i& 0\\0 & N_{i, 11}\srix$.
	
Applying the arguments that are used in \cite{redmannbenner,redmannspa2}, we know that 
	\begin{equation}\label{mean_semi_group}
			\mathbb{E}[\Phi^e(t,s)B^e{B^e}^\top{\Phi^e}^\top(t,s)]=\mathbb{E}[\Phi^e(t-s)B^e{B^e}^\top{\Phi^e}^\top(t-s)].
	\end{equation}	
	For $t\in [0, T]$, the identity in \eqref{mean_semi_group} yields
	\begin{equation}\label{first_bound}
\begin{aligned}
			\mathbb{E}\int_0^t\Big\|C^e \Phi^e(t, s) B^e\Big\|_F^2 \ds
			&=\mathbb{E}\int_0^t \trace(C^e \Phi^e(t, s) B^e{B^e}^\top{\Phi^e}^\top(t,s){C^e}^\top)\ds\\
			&= \mathbb{E}\int_0^t \trace(C^e \Phi^e(s) B^e{B^e}^\top{\Phi^e}^\top(s){C^e}^\top)\ds\leq \trace\Big(C^e \int_0^T F^e(s) \ds\, {C^e}^\top\Big)
\end{aligned}
	\end{equation}
	with $F^e(t) = \mathbb{E}\left[\Phi^e(t) B^e{B^e}^\top{\Phi^e}^\top(t)\right]$ exploiting Fubini's theorem as well as the fact that the trace and $C^e$ are linear operators.  Since $F(t) = \mathbb{E}\left[\Phi(t) BB^\top\Phi^\top(t)\right]$ is a stochastic representation for equation \eqref{matrixequalforF}, see Section \ref{sec:gram}, $F^e$ satisfies \begin{align}\label{matrixequalforF_error}
		{\dot F}^e(t) = A^eF^e(t)+F^e(t){A^e}^\top+ \sum_{i, j=1}^q N_i^e F^e(t) {N_j^e}^\top k_{ij},\quad F^e(0)=B^e{B^e}^\top,
	\end{align}
using the same arguments. From \eqref{matrixequalforF_error}, it can be seen that the left upper $n\times n$ block of $F^e$ is $F$ which solves \eqref{matrixequalforF}. On the other hand, the right lower  $r\times r$ block $\bar F$ and the right upper $n\times r$ block $\tilde F$ of $F^e$ satisfy
\begin{align}\label{odebar}
 \dot{\bar{F}}(t)&=A_{11}\bar{F}(t)+\bar{F}(t)A_{11}^\top+ \sum_{i, j=1}^q N_{i,11} \bar{F}(t) N_{j,11}^\top k_{ij},\quad \bar{F}(0)=B_1B_1^\top,\\ \label{odetilde}
 \dot{\tilde{F}}(t)&=A\tilde{F}(t)+\tilde{F}(t)A_{11}^\top+ \sum_{i, j=1}^q N_{i} \tilde{F}(t) N_{j,11}^\top k_{ij},\quad \tilde{F}(0)=BB_1^\top,
\end{align}
with stochastic representations \begin{align}\label{stocha_rep}
                        \bar{F}(t)=\mathbb{E}[\bar{\Phi}(t)B_1B_1^\top\bar{\Phi}^\top(t)], \quad \tilde{F}(t)=\mathbb{E}[\Phi(t)BB_1^\top\bar{\Phi}^\top(t)].        
                                \end{align}
Consequently, using \eqref{first_bound} with the partition $F^e= \smat F & \tilde F\\\tilde F^\top & \bar F \srix$, we find \begin{align*}
			\mathbb{E}\int_0^t\Big\|C^e \Phi^e(t, s) B^e\Big\|_F^2 \ds
			\leq \trace\Big(C P_T\, {C}^\top\Big) + \trace\Big(C_1 \bar P_T C_1^\top\Big)-2 \trace\Big(C \tilde{P}_T C_1^\top\Big),
		\end{align*}
where $\bar{P}_T=\int_0^T\bar{F}(t)\dt$ and $\tilde{P}_T=\int_0^T\tilde{F}(t)\dt$	
solve 
	\begin{align}\label{barPT}
		\bar{F}(T)-B_1B_1^\top &= A_{11}\bar{P}_T+\bar{P}_T A_{11}^\top+\sum_{i, j=1}^q N_{i,11} \bar{P}_T N_{j,11}^\top k_{ij},\\ \label{tildePT}
		\tilde{F}(T)-BB_1^\top &= A\tilde{P}_T+\tilde{P}_T A_{11}^\top+\sum_{i, j=1}^q N_{i} \tilde{P}_T N_{j,11}^\top k_{ij}.	 	
	\end{align}
Summing up, we obtain that  
	\begin{equation}\label{error_1}
			\sup_{t\in[0,T]}\mathbb{E}\|y(t)-\bar{y}(t)\|_2\leq \left(\trace(CP_TC^\top)+\trace(C_1\bar{P}_TC_1^\top)-2\trace(C\tilde{P}_TC_1^\top) \right)^{\frac{1}{2}}	 \|u\|_{L_T^2}.
	\end{equation}
The bound in \eqref{error_1} is very useful in order to check for the quality of a reduced system. Since $P_T$ has to be computed to obtain \eqref{reduced_system}, the actual cost to determine the bound lies in solving the low-dimensional matrix equations \eqref{barPT} and \eqref{tildePT}. However, \eqref{error_1} is only an a-posteriori estimate which is computed after the reduced order model is derived. Therefore, we discuss the role of $\Sigma_{T, 2}=\diag(\sigma_{T, r+1}, \ldots, \sigma_{T, n})$ which is either the matrix of neglected eigenvalues of $P_T$ or HSVs of the system. $\Sigma_{T, 2}$ is associated to the truncated state variables $x_2$ of \eqref{stochstate_trans}, compare with \eqref{partitions}. By  \eqref{diffreachjanein} and \eqref{outenergiewithremainder}, it is already known that such variables $x_2$ are less relevant if $\sigma_{T, r+1}, \ldots, \sigma_{T, n}$ are small. This makes the values $\sigma_i$ a good a-priori criterion for the choice of $r$. In the following, we want to investigate how the truncated values $\sigma_{T, r+1}, \ldots, \sigma_{T, n}$ characterize the error of the approximation. For that reason, we prove an error bound depending on $\Sigma_{T, 2}$. As we will see, $\Sigma_{T, 2}$ is not the only factor having an impact on the bound that is structurally independent of whether we choose $S=S_1$ or $S= S_2$.
	
	\begin{thm}\label{thm_error_bound}
Let $y$ be the output of \eqref{original_system} and $\bar y$ be the one of \eqref{reduced_system}. Suppose that $S=S_1, S_2$, where $S_1$ is the factor of the eigenvalue decomposition of the Gramian $P_T$ and $S_2$ is the balancing transformation defined in \eqref{balancedtrans}. Using partition \eqref{partitions} of the realization $(A_S, B_S, C_S, N_{i, S})$, we have
		\begin{align*}
		\sup_{t\in[0,T]}\mathbb{E}\|y(t)-\bar{y}(t)\|_2	
				&\leq \Bigg(\trace\bigg( \Sigma_{T,2} \bigg[ C_2^\top C_2+2A_{12}^\top \tilde{Q}_2+\sum_{i,j=1}^q N_{i,12}^\top \Big(2\tilde{Q}\begin{pmatrix} {N}_{j,12} \\ {N}_{j,22} \end{pmatrix}-\bar{Q}N_{j,12}\Big) k_{ij}\bigg] \bigg)\\
				&\quad\quad+2\trace\bigg( \tilde{Q}\begin{pmatrix} \tilde{F}_1-{F}_{11} \\ \tilde{F}_2-{F}_{21} \end{pmatrix}\bigg)+\trace\bigg(\bar{Q}(F_{11}-\bar{F}) \bigg)\Bigg)^{\frac{1}{2}} \|u\|_{L_T^2},
		\end{align*}
where $\bar{Q}$ and $\tilde{Q}=\begin{pmatrix}
			\tilde{Q}_1\quad  \tilde{Q}_2  
		\end{pmatrix}$ and are the unique solutions to 
		\begin{align}\label{barQ}
		A_{11}^\top \bar{Q}+\bar{Q} A_{11}+\sum_{i,j=1}^q N_{i,11}^\top\bar{Q}N_{j,11}k_{ij}&=-C_1^\top C_1,\\
		\label{tildeQ}
			A_{11}^\top \tilde{Q}+\tilde{Q} A_S+\sum_{i,j=1}^q N_{i,11}^\top\tilde{Q}N_{j, S}k_{ij}&=-C_1^\top C_S.
		\end{align}
Moreover, the above bound involves $F_S(T):=SF(T)S^{\top}=\smat 
			F_{11} & 	F_{12} \\
			F_{21} & 	F_{22} 
		\srix$ and $\tilde{F}_S(T):= S\tilde{F}(T)=\smat 
		{\tilde{F}_1} \\
		{\tilde{F}_2 } 
	\srix$, where $F(T)$, $\bar F=\bar F(T)$ and $\tilde F(T)$ are the terminal values of \eqref{matrixequalforF}, \eqref{odebar} and \eqref{odetilde}, respectively.
	\end{thm}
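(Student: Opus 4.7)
My plan is to reshape the squared coefficient on the right-hand side of the already-established a-posteriori bound \eqref{error_1} into the form stated in the theorem, by systematically eliminating the primal Gramians $\bar P_T$, $\tilde P_T$ and the orphan $\trace(\Sigma_{T,1}C_1^\top C_1)$ in favour of $\bar Q$, $\tilde Q$ and the truncated data weighted by $\Sigma_{T,2}$. The trigger is the observation that in the $S$-coordinates $P_{T,S}=SP_TS^\top=\Sigma_T$ is block-diagonal, so
\begin{equation*}
\trace(CP_TC^\top)=\trace(\Sigma_{T,1}C_1^\top C_1)+\trace(\Sigma_{T,2}C_2^\top C_2),
\end{equation*}
while $\trace(C\tilde P_T C_1^\top)=\trace(C_S\tilde P_{T,S}C_1^\top)$ with $\tilde P_{T,S}:=S\tilde P_T$ satisfying the equation obtained by left-multiplying \eqref{tildePT} with $S$.

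I then reformulate the two mixed traces. Testing \eqref{barPT} against $\bar Q$ and invoking \eqref{barQ} with the cyclic property of the trace (together with $k_{ij}=k_{ji}$ to symmetrise the Ito correction sum) yields $\trace(C_1\bar P_T C_1^\top)=\trace(\bar Q B_1B_1^\top)-\trace(\bar Q\bar F(T))$. An analogous pairing of $\tilde Q$ against the transformed equation for $\tilde P_{T,S}$ and of \eqref{tildeQ} against $\tilde P_{T,S}$ gives the matching identity $\trace(C\tilde P_T C_1^\top)=\trace(\tilde Q B_SB_1^\top)-\trace(\tilde Q\tilde F_S(T))$.

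The decisive step is to re-express the orphan $\trace(\Sigma_{T,1}C_1^\top C_1)$ through $\tilde Q$ and the $\Sigma_{T,2}$-weighted truncated blocks. Introducing the padded matrix $\hat P:=\begin{pmatrix}\Sigma_{T,1}\\0\end{pmatrix}$ and isolating the first block-column of the $S$-transformed version of \eqref{comPT} yields, by a direct block calculation,
\begin{align*}
\begin{pmatrix}F_{11}\\F_{21}\end{pmatrix}-B_SB_1^\top
&=A_S\hat P+\hat PA_{11}^\top+\sum_{i,j=1}^q k_{ij}\,N_{i,S}\hat PN_{j,11}^\top\\
&\quad+\begin{pmatrix}0\\\Sigma_{T,2}A_{12}^\top\end{pmatrix}+\sum_{i,j=1}^q k_{ij}\begin{pmatrix}N_{i,12}\\N_{i,22}\end{pmatrix}\Sigma_{T,2}N_{j,12}^\top,
\end{align*}
where the last two terms quantify the defect of $\hat P$ with respect to the reduced-coupled Lyapunov operator. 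Pairing this identity with \eqref{tildeQ} (which I multiply by $\hat P$ on the right and trace) produces, after one more $i\leftrightarrow j$ swap, an expression for $\trace(\Sigma_{T,1}C_1^\top C_1)$ containing precisely the building blocks $\trace(\Sigma_{T,2}A_{12}^\top\tilde Q_2)$, the cross-noise sum $\sum_{i,j}k_{ij}\trace\bigl(\Sigma_{T,2}N_{i,12}^\top\tilde Q\smat N_{j,12}\\ N_{j,22}\srix\bigr)$, the first-block-column term $-\trace\bigl(\tilde Q\smat F_{11}\\ F_{21}\srix\bigr)$, and a copy of $\trace(\tilde Q B_SB_1^\top)$ that will cancel against the one from the second paragraph.

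Substituting the three reformulations into the squared bound \eqref{error_1}, the two copies of $\trace(\tilde Q B_SB_1^\top)$ cancel and regrouping the surviving terms according to $\Sigma_{T,2}$, $\tilde Q$ and $\bar Q$ produces precisely the three traces claimed in the theorem; taking the square root and multiplying by $\|u\|_{L_T^2}$ then finishes the argument. I expect the main difficulty to be purely bookkeeping: keeping the block partition \eqref{partitions} consistent under repeated use of trace cyclicity, and invoking $k_{ij}=k_{ji}$ in each Ito correction sum to identify $\sum k_{ij}\trace(\cdots N_i\cdots N_j\cdots)$ with its index-swapped sibling whenever $\bar Q$ or $\tilde Q$ sits on the ``wrong'' side of $N_i$ and $N_j$.
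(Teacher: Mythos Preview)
Your strategy is the same as the paper's, and steps 4 and 5 are exactly the identities the paper uses. The gap is in the final assembly: the cancellations you announce do not occur.

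Concretely, step~5 contributes $+\trace(\tilde Q B_SB_1^\top)$ while $-2$ times step~4 contributes $-2\trace(\tilde Q B_SB_1^\top)$, leaving $-\trace(\tilde Q B_SB_1^\top)$, not zero. At the same time your step~3 produces a term $+\trace(\bar Q B_1B_1^\top)$ that has no partner anywhere and is absent from the target bound. As a side effect, the $\Sigma_{T,2}$-weighted pieces coming from step~5 enter with coefficient~$1$, whereas the theorem needs coefficient~$2$ on $A_{12}^\top\tilde Q_2$ and on the $\tilde Q$-cross-noise sum. So three separate discrepancies show that the ``substitute and regroup'' step, as written, cannot close.

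The fix the paper uses is a small reorganisation of your own ingredients. First, it does not pair $\bar Q$ with $\bar P_T$ alone; instead it pairs $\bar Q$ with the \emph{difference} $\bar P_T-\Sigma_{T,1}$, whose Lyapunov residual is obtained by subtracting the $(1,1)$-block of the $S$-transformed equation \eqref{comPT} from \eqref{barPT}. In that subtraction the $B_1B_1^\top$ terms cancel \emph{before} $\bar Q$ ever sees them, and what remains is exactly $\bar F-F_{11}$ together with $\sum_{i,j}k_{ij}N_{i,12}\Sigma_{T,2}N_{j,12}^\top$. Second, the paper carries out the block-column extraction (your step~5 identity) \emph{inside} the evaluation of $-\trace(C_S\tilde Y_TC_1^\top)$: after step~4 one has $-\trace(\tilde Q B_SB_1^\top)+\trace(\tilde Q\tilde F_S(T))$, and \emph{then} one substitutes for $-B_SB_1^\top$ via the first block column of the transformed \eqref{comPT}. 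This gives, for a \emph{single} copy of the cross term,
\[
-\trace(C_S\tilde Y_TC_1^\top)=\trace\Big(\tilde Q\begin{pmatrix}\tilde F_1-F_{11}\\\tilde F_2-F_{21}\end{pmatrix}\Big)+\big[\Sigma_{T,2}\text{-terms}\big]-\trace(\Sigma_{T,1}C_1^\top C_1),
\]
so that after multiplying by $2$ the $\Sigma_{T,2}$-terms acquire the correct factor~$2$, and the $-2\trace(\Sigma_{T,1}C_1^\top C_1)$ combines with the $+\trace(\Sigma_{T,1}C_1^\top C_1)$ from the block-diagonal split of $\trace(CP_TC^\top)$ to produce precisely $\trace((\bar P_T-\Sigma_{T,1})C_1^\top C_1)$, which is then handled by the $\bar Q$-pairing above. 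If you rewire your steps in this order, your argument goes through.
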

The terms in the bound of Theorem \ref{thm_error_bound} that do not directly depend on $\Sigma_{T, 2}$ are related to the covariance error of the dimension reduction at the terminal time $T$ (with $u\equiv 0$). 
To see this, let $V$ be the matrix introduced in \eqref{defineV}. As explained below \eqref{defineV}, the state of the reduced system \eqref{reduced_system} can be interpreted as an approximation of the original state in the subspace spanned by the columns of $V$. By the stochastic representations of $F(T)$, $\tilde F(T)$ and $\bar F(T)$ (see above \eqref{matrixequalforF} and \eqref{stocha_rep}), we can view $F(T)$ and $\bar F(T)$ as covariances of the original and reduced model at time $T$, whereas $\tilde F(T)$ describes the correlations between both systems. Let us now assume that \begin{align}\label{approx1}
F(T) &\approx \tilde F(T)V^\top,\\ \label{approx2}
F(T) &\approx V  \bar F(T) V^\top,                                                                                                                                                                                                                                                           \end{align}
i.e., the covariance at $T$ is well-approximated in the reduced system. This is, e.g., given if the uncontrolled state is well-approximated in the range of $V$ at time $T$, i.e., $\Phi(T) B \approx V \bar\Phi(T) B_1$. Now, multiplying \eqref{approx1} with $S$ from the left and with $W$ (defined in \eqref{defineV}) from the right, we obtain 
that $\smat {\tilde{F}_1-{F}_{11}} \\ {\tilde{F}_2-{F}_{21} }\srix$ is small. Multiplying \eqref{approx2} with $W^\top$ from the left and with $W$ from the right provides a low deviation between $F_{11}= W^\top F(T) W$ and $\bar{F}$. Although we additionally have these terms related to the covariance error, looking at $\Sigma_{T, 2}$ is still suitable for getting an intuition concerning the error and hence a first idea for the choice of $r$. This is because a small $\Sigma_{T, 2}$ goes along with a small error between $\Phi(T) B$ and its approximation $V \bar\Phi(T) B_1$ in the range of $V$. This observation can be made due to\begin{align*}                                                                                                                                                                                                                                                        \mathbb{E}\int_0^T\|\left(\Phi(t) B\right)^\top z_T \|_2^2\dt = z_T^\top P_T z_T = 0,                                                                                                                                                                                                                                                                                                                                                                                                                                                                                                                                                                                                                                       \end{align*}
 where $z_T\in \kernel P_T$. Since $t\mapsto\Phi(t)$ is $\mathbb P$-almost surely continuous, we have $\left(\Phi(t) B\right)^\top z_T=0$ $\mathbb P$-almost surely for all $t\in [0, T]$. Choosing $t=T$, we therefore know that the columns of $\Phi(T) B$ are orthogonal to $\kernel P_T$. This means that $\Phi(T) B\in \im P_T$ since $P_T$ is symmetric. Hence, there is a matrix $Z_T$ such that \begin{align*}
  \Phi(T) B =  P_T  Z_T  =  S^{-1} \Sigma_T S^{-\top} Z_T = \begin{pmatrix}
			V &  \star 
		\end{pmatrix} \begin{pmatrix}
			\Sigma_{T, 1} &  \\
			& \Sigma_{T,2}
		\end{pmatrix}   \begin{pmatrix}
			V^\top \\  \star 
		\end{pmatrix} Z_T\approx V  \Sigma_{T, 1} V^\top Z_T,                                                                                                                                                                                                                                                                                                                                                                                                                                                                                                                                                                                                                                \end{align*}
i.e., the columns of $ \Phi(T) B$ lie almost in the span of $V$ if $\Sigma_{T,2}$ is small. Therefore, a good approximation can be expected if one truncates states with associated small values $\sigma_{T, r+1},\ldots,\sigma_{T, n}$. This can be confirmed by computing the representation in \eqref{error_1} after a reduced order dimension $r$ was chosen based on the values $\sigma_{T, i}$. 
\begin{remark}
Notice that the the covariance $F(T)$ vanishes in the limit as $T\rightarrow \infty$ if  \eqref{original_system} is mean square asymptotically stable. In this context, the deviations in \eqref{approx1} and \eqref{approx2} can be expected to be small for sufficiently large $T$ since the covariance error disappears at $\infty$. If the system is unstable, we have  $\|F(T)\| \rightarrow \infty$ as $T\rightarrow \infty$. In this case, the covariance error might be large and dominant if $T$ is very large such that the approximation quality is lower. The role of $T$ is additionally discussed in Section \ref{numerics section}.
\end{remark}
We are now ready to prove the error bound in the following:
	\begin{proof}[Proof of Theorem \ref{thm_error_bound}]
		Since $S= S_1, S_2$ diagonalizes $P_T$, we have 
		\begin{equation}\label{balancePT}
			A_S\Sigma_T+\Sigma_TA_S^\top+\sum_{i,j=1}^q N_{i, S} \Sigma_T N_{j, S}^\top k_{ij}=-B_S B_S^\top +F_S(T).
		\end{equation}
	We set	$\tilde{Y}_T:=S\tilde{P}_T$
		and obtain the corresponding equation by multiplying \eqref{tildePT} with $S$ from the left resulting in
		\begin{equation}\label{tildeY}
			A_S\tilde{Y}_T+\tilde{Y}_TA_{11}^\top +\sum_{i,j=1}^q N_{i,S}\tilde{Y}_T N_{j,11}^\top k_{ij}=-B_SB_1^\top+\tilde{F}_S(T).
			\end{equation}
Now, we analyze the trace expression $\epsilon^2 :=\left(\trace(CP_TC^\top)+\trace(C_1\bar{P}_TC_1^\top)-2\trace(C\tilde{P}_TC_1^\top) \right)$ in \eqref{error_1}. We see that
		\begin{equation}\label{epsilon}
\begin{aligned}
				\epsilon^2&=\left(\trace(C_S\Sigma_TC_S^\top)+\trace(C_1\bar{P}_TC_1^\top)-2\trace(C_S\tilde{Y}_TC_1^\top) \right)\\
				&=\left(\trace(C_1\Sigma_{T,1}C_1^\top)+\trace(C_2\Sigma_{T,2}C_2^\top)+\trace(C_1\bar{P}_TC_1^\top)-2\trace(C_S\tilde{Y}_TC_1^\top) \right).
\end{aligned}
		\end{equation}
Exploiting \eqref{tildeQ} yields
	\begin{align*}
		-\trace(C_S\tilde{Y}_TC_1^\top)&= -\trace(\tilde{Y}_TC_1^\top C_S)=\trace\left( \tilde{Y}_T\left[ A_{11}^\top \tilde{Q}+\tilde{Q} A_S+\sum_{i,j=1}^q N_{i,11}^\top\tilde{Q}N_{j, S}k_{ij} \right]\right)\\
			&=\trace\left(\tilde{Q} \left[ A_S\tilde{Y}_T +\tilde{Y}_T A_{11}^\top+\sum_{i,j=1}^q N_{i, S}\tilde{Y}_TN_{j,11}^\top k_{ij} \right]\right).
	\end{align*}
 Comparing \eqref{tildeQ} and  \eqref{tildeY}, we find that
   \begin{equation}\label{chtildeY}
   	-\trace(C_S\tilde{Y}_TC_1^\top)= -\trace(\tilde{Q}B_SB_1^\top)+\trace(\tilde{Q}\tilde{F}_S(T)).
   \end{equation}
Using the partition in \eqref{partitions}, the first $r$ columns of \eqref{balancePT} are 
	\begin{equation}\label{above_eq}
\begin{aligned}
			&\begin{pmatrix} {A}_{11} \\ {A}_{21} \end{pmatrix}\Sigma_{T,1}+\begin{pmatrix} \Sigma_{T,1}{A}_{11}^\top \\ \Sigma_{T,2}{A}_{12}^\top \end{pmatrix}+\sum_{i,j=1}^q \left(\begin{pmatrix} {N}_{i,11} \\ {N}_{i,21} \end{pmatrix}\Sigma_{T,1} N_{j,11}^\top+\begin{pmatrix} {N}_{i,12} \\ {N}_{i,22} \end{pmatrix}\Sigma_{T,2} N_{j,12}^\top  \right)k_{ij}\\
			&=-B_SB_1^\top+\begin{pmatrix} {F}_{11} \\ {F}_{21} \end{pmatrix}.
\end{aligned}
	\end{equation}
	We insert \eqref{above_eq} into \eqref{chtildeY} and obtain
	{\allowdisplaybreaks \begin{align*}
			-\trace(C_S\tilde{Y}_TC_1^\top)&= \trace\left( \tilde{Q}\begin{pmatrix} \tilde{F}_1-{F}_{11} \\ \tilde{F}_2-{F}_{21} \end{pmatrix}\right)\\
			&+\trace \left( \tilde{Q} \left[ \begin{pmatrix} {A}_{11} \\ {A}_{21} \end{pmatrix}\Sigma_{T,1}+\begin{pmatrix} \Sigma_{T,1}{A}_{11}^\top \\ \Sigma_{T,2}{A}_{12}^\top \end{pmatrix}+\sum_{i,j=1}^q \left(\begin{pmatrix} {N}_{i,11} \\ {N}_{i,21} \end{pmatrix}\Sigma_{T,1} N_{j,11}^\top+\begin{pmatrix} {N}_{i,12} \\ {N}_{i,22} \end{pmatrix}\Sigma_{T,2} N_{j,12}^\top  \right)k_{ij}  \right]\right)\\
			&=\trace\left( \tilde{Q}\begin{pmatrix} \tilde{F}_1-{F}_{11} \\ \tilde{F}_2-{F}_{21} \end{pmatrix}\right)+\trace\left( \Sigma_{T,2} \left[ A_{12}^\top \tilde{Q}_2+\sum_{i,j=1}^q N_{i,12}^\top\tilde{Q}\begin{pmatrix} {N}_{j,12} \\ {N}_{j,22} \end{pmatrix}k_{ij}\right] \right)\\
			&\quad+\trace\left( \Sigma_{T,1} \left[ \tilde{Q}\begin{pmatrix} {A}_{11} \\ {A}_{21} \end{pmatrix}+A_{11}^\top \tilde{Q}_1+\sum_{i,j=1}^q N_{i,11}^\top\tilde{Q}\begin{pmatrix} {N}_{j,11} \\ {N}_{j,21} \end{pmatrix}k_{ij}\right] \right).
	\end{align*}}	
Using the partition of the balanced realization in \eqref{partitions}, we observe that the last term of above equation is the first $r$ columns of \eqref{tildeQ}. So, we can say that
	\begin{equation}\label{crossgramtrans}
\begin{aligned}
			-\trace(C_S\tilde{Y}_TC_1^\top)&=\trace\left( \tilde{Q}\begin{pmatrix} \tilde{F}_1-{F}_{11} \\ \tilde{F}_2-{F}_{21} \end{pmatrix}\right)+\trace\left( \Sigma_{T,2} \left[ A_{12}^\top \tilde{Q}_2+\sum_{i,j=1}^q N_{i,12}^\top\tilde{Q}\begin{pmatrix} {N}_{j,12} \\ {N}_{j,22} \end{pmatrix}k_{ij}\right] \right)\\
			&\quad -\trace(\Sigma_{T,1} C_1^\top C_1).
\end{aligned}
	\end{equation}
	Inserting \eqref{crossgramtrans} into \eqref{epsilon}, we have
	\begin{equation}
\begin{aligned}
			\epsilon^2&= \trace\left(  \Sigma_{T,2} \left[C_2^\top C_2+2A_{12}^\top \tilde{Q}_2+2 \sum_{i,j=1}^q N_{i,12}^\top\tilde{Q}\begin{pmatrix} {N}_{j,12} \\ {N}_{j,22} \end{pmatrix}k_{ij}\right] \right)\\
			&\quad + 2\trace\left( \tilde{Q}\begin{pmatrix} \tilde{F}_1-{F}_{11} \\ \tilde{F}_2-{F}_{21} \end{pmatrix}\right)+\trace\left( (\bar{P}_T-\Sigma_{T,1})C_1^\top C_1\right).
\end{aligned}
\end{equation}	

Equation \eqref{barQ} now yields
	\begin{align*}
			&\trace\left( (\bar{P}_T-\Sigma_{T,1})C_1^\top C_1\right)\\
			&=-\trace\left( \bar{Q}\left[ A_{11}(\bar{P}_T-\Sigma_{T,1})+(\bar{P}_T-\Sigma_{T,1}) A_{11}^\top+\sum_{i,j=1}^q N_{i,11}(\bar{P}_T-\Sigma_{T,1})N_{j,11}^\top k_{ij}\right)\right]
	\end{align*} 	
The combination of \eqref{barPT} and the left upper block of \eqref{balancePT} gives
	\begin{align*}
			 &A_{11}(\bar{P}_T-\Sigma_{T,1})+(\bar{P}_T-\Sigma_{T,1}) A_{11}^\top+\sum_{i,j=1}^q N_{i,11}(\bar{P}_T-\Sigma_{T,1})N_{j,11}^\top k_{ij}\\
			 & =\sum_{i,j=1}^q N_{i,12}\Sigma_{T,2}N_{j,12}^\top k_{ij}+(\bar{F}-F_{11}).
	\end{align*}	
		Consequently, we have
		\[\trace\left( (\bar{P}_T-\Sigma_{T,1})C_1^\top C_1\right)=-\trace\left( \Sigma_{T,2}\left[ \sum_{i,j=1}^q N_{i,12}^\top \bar{Q}N_{j,12} k_{ij}\right]\right)+\trace\left(\bar{Q}(F_{11}-\bar{F})\right).\]
		So, we obtain that
		\begin{align*}
				\epsilon^2&= \trace\left( \Sigma_{T,2} \left[ C_2^\top C_2+2A_{12}^\top \tilde{Q}_2+\sum_{i,j=1}^q N_{i,12}^\top \Big(2\tilde{Q}\begin{pmatrix} {N}_{j,12} \\ {N}_{j,22} \end{pmatrix}-\bar{Q}N_{j,12}\Big) k_{ij}\right] \right)\\
				&\quad+2\trace\left( \tilde{Q}\begin{pmatrix} \tilde{F}_1-{F}_{11} \\ \tilde{F}_2-{F}_{21} \end{pmatrix}\right)+\trace\left(\bar{Q}(F_{11}-\bar{F})\right),
					\end{align*}
which concludes the proof of this theorem.
	\end{proof}
Notice that the estimate in Theorem \ref{thm_error_bound} is also beneficial if $N_i=0$ for all $i=1, \ldots, q$, since it improves the deterministic bound \cite{redmannkuerschner} in the sense that we can generally deduce the relation between the truncated HSVs and the actual approximation error here. It is important to notice that, in the deterministic case, ``improvement'' is not meant in terms of accuracy. The error bound representation in \cite{redmannkuerschner} just has the drawback that it allows to make similar conclusions only if the underlying system is asymptotically stable. 
Moreover, the result of Theorem \ref{thm_error_bound} is a generalization of the bounds for mean square asymptotically stable stochastic systems \cite{redmannbenner, redmannigor}, where the covariance related terms vanish as $T\rightarrow \infty$. 

	\section{Computation of Gramians}\label{sec_compGram}
In this section, we discuss how to compute $P_T$ and $Q_T$ which allow us to identify redundant information in the system. These matrices are solutions of Lyapunov equations \eqref{comPT} and \eqref{gengenlyapobs} with left hand sides depending on $F(T)$ and $G(T)$, respectively. Given $F(T)$ and $G(T)$ it is therefore required to solve generalized Lyapunov equations \begin{align}\label{example_eq}
     L= \mathcal L_A(X) + \Pi(X)                                                                                                                                                                                                                                                                                                                                                                                                                                                                                                                                                                                                                        \end{align}
efficiently, where $L$ is a symmetric matrix of suitable dimension. According to Remark \ref{rem_vect} this can be done by vectorization, i.e., one can try to solve $\vect\left(L\right)= \mathcal K \vect(X)$ with the Kronecker matrix $\mathcal K$ defined in \eqref{stochstab}. Since $\mathcal K$ is of order $n^2$, the complexity of deriving $\vect(X)$ from this linear system of equations is $\mathcal{O}(n^6)$ making this procedure infeasible for $n\gg 100$.

However, more efficient techniques have been developed in order to solve \eqref{example_eq}, see, e.g.,  \cite{damm_lyap_eq}, where a sequence of standard Lyapunov equations ($\Pi =0$) is solved to find $X$. Such standard Lyapunov equations can either
be tackled by direct methods, such as Bartels-Stewart \cite{Bartels_Stewart}, which cost $\mathcal{O}(n^3)$ operations, or by iterative methods such as ADI or Krylov subspace methods
\cite{Simoncini_ueberblick}, which have a much  smaller complexity than the Bartels-Stewart algorithm, in particular, when the left hand side is of low rank or structured (complexity of $\mathcal{O}(n^2)$ or less). 

Solving for $P_T$ and $Q_T$ now relies on having access to $F(T)$ and $G(T)$ which are the terminal values of the matrix-differential equations \eqref{matrixequalforF} and \eqref{ODEforG}. The remainder of this section will deal with strategies to compute these terminal values.

\subsection{Exact methods}\label{exact_Gram}

 One solution to overcome the issue of unknown $F(T)$ and $G(T)$ is to use vectorizations of \eqref{matrixequalforF} and \eqref{ODEforG} for dimensions $n$ of a few hundreds. If we define $f(t):=\vect(F(t))$ and $g(t)=\vect(G(t))$, then
\begin{align*}
		\dot f(t)=\mathcal K f(t), \quad f(0)=\vect(BB^\top),\quad \dot g(t)=\mathcal K^\top g(t), \quad g(0)=\vect(C^\top C),
\end{align*}  
where $ \mathcal K$ is defined in \eqref{stochstab}. Therefore, obtaining $F(T)$ and $G(T)$ rely on the efficient computation of a matrix exponential, since
\begin{align*}
f(T)=\expn^{\mathcal K T}\vect(BB^\top), \quad g(T)=\expn^{\mathcal K^\top T}\vect(C^\top C).
\end{align*}
One can find a discussion on how to determine a matrix exponential efficiently in \cite{kurschner2018balanced} and references therein. Alternatively, one might think of discretizing the matrix differential equations \eqref{matrixequalforF} and \eqref{ODEforG} to find an approximation of $F(T)$ and $G(T)$. However, as stated above, these equations are equivalent to ordinary differential equations of order $n^2$. Solving such extremely large scale systems is usually not feasible. In addition, only implicit schemes would allow for a reasonable step size in the discretization making the problem even more complex. For that reason, we discuss more suitable numerical approximations in the following.  

\subsection{Sampling based approaches}\label{sampled_Gram}

We aim to derive an approximation of the terminal value $F(T) = \mathbb E [\Phi(T)BB^\top \Phi^\top(T)]$ of \eqref{matrixequalforF} by different stochastic representations. This alternative approach is required since computing $\expn^{\mathcal K T}$ is not feasible if $n\gg 100$ knowing that $\mathcal K\in \mathbb R^{n^2\times n^2}$. Therefore, we discuss sampling based approaches in the following. Let $\Phi^i(T)$, $i\in\{1, \ldots, M\}$, be i.i.d. copies of $\Phi(T)$. Then, we have $\frac{1}{M}\sum_{i=1}^M \Phi^i(T)BB^\top {\Phi^i}(T)^\top\approx F(T)$ if $M$ is sufficiently large. This requires to sample the random variable $\Phi(T)B$ possibly many times.  $\Phi(T)B$ is the terminal value of the stochastic differential equation \begin{align}\label{generatesampleseq}
	dx_B(t) = Ax_B(t) dt+\sum_{i=1}^q N_i x_B(t) dw_i(t),\quad x_B(0)=B,                                                                                                                                                                                                                                                                                                                                                                                                                                                                                                                                                                                                                                                                                                     \end{align}
with $x_B(t) \in \mathbb R^{n\times m}$. System \eqref{generatesampleseq} can be seen as a matrix-valued homogeneous version of \eqref{stochstatenew} ($u\equiv 0$) with initial state $B$. If \eqref{original_system} needs to be evaluated for many different controls $u$ and additionally a large number of samples are required for each fixed $u$, it even pays off to generate many samples of the solution to \eqref{generatesampleseq}. In particular, this is true if the number of columns of $B$ is low. However, we want to avoid evaluating \eqref{generatesampleseq} too often. The number of samples $M$ required for a good estimate of $F(T)$ depends on the variance of $\Phi(T)BB^\top \Phi^\top(T)$. Therefore, we want to reduce the variance by finding a better stochastic representation than $\mathbb E [\Phi(T)BB^\top \Phi^\top(T)]$. In the spirit of variance reduction techniques, we find the zero variance unbiased estimator first. To do so, we apply Ito's product rule (see e.g. \cite{oksendal}) to obtain \begin{align*} 	                                                                                                                                                                                                                                                                                                                                                                                                                                                                                                                                      
  d\left(x_B(t) x_B^\top(t)\right) &=   d\left(x_B(t)\right) x_B^\top(t)  + x_B(t) d\left(x_B^\top(t)\right) +   d\left(x_B(t) \right)d\left(x_B^\top(t)\right)\\
  &=\left(A x_B(t) dt+\sum_{i=1}^q N_i x_B(t) dw_i(t)\right) x_B^\top(t)  + x_B(t) \left(x_B^\top(t) A^\top dt   
  +\sum_{i=1}^q x_B^\top(t)N_i^\top dw_i(t)\right) \\
  &\quad+  \sum_{i,j=1}^q N_i x_B(t) x_B^\top(t) N_{j}^\top k_{ij}dt\\ &=(\mathcal L_A + \Pi)\left(x_B(t) x_B^\top(t)\right)dt+ \sum_{i=1}^q \mathcal L_{N_i}\left(x_B(t)x_B^\top(t)\right) dw_i(t).
  \end{align*}
This stochastic differential is now exploited to find \begin{align*}
d\left(\expn^{\mathcal K (T-t)} \vect(x_B(t) x_B^\top(t))\right) 
&= - \expn^{\mathcal K (T-t)} \mathcal K \vect(x_B(t) x_B^\top(t)) dt + \expn^{\mathcal K (T-t)}  d\left(\vect(x_B(t) x_B^\top(t))\right)\\
&=\sum_{i=1}^q \expn^{\mathcal K (T-t)} \vect\left(\mathcal L_{N_i}\left(x_B(t)x_B^\top(t)\right)\right) dw_i(t)
                                                     \end{align*}
 using that $\vect\left((\mathcal L_A + \Pi)\left(x_B(t) x_B^\top(t)\right)\right)= \mathcal K \vect(x_B(t) x_B^\top(t))$. Hence, we have \begin{align*}
\vect\left(x_B(T) x_B^\top(T)\right)
&= \expn^{\mathcal K T} \vect(BB^\top) + \sum_{i=1}^q \int_0^T \expn^{\mathcal K (T-t)} \vect\left(\mathcal L_{N_i}\left(x_B(t)x_B^\top(t)\right)\right) dw_i(t).
                                                     \end{align*}                                          
Devectorizing this equation yields \begin{align}\label{zerovariancerep}
F(T) = x_B(T) x_B^\top(T) - \sum_{i=1}^q \int_0^T F\left( T-t,  \mathcal L_{N_i}\left(x_B(t)x_B^\top(t)\right)\right) dw_i(t),
                                   \end{align}
where the second argument in $F$ represents the initial condition of \eqref{matrixequalforF}. The right hand side of \eqref{zerovariancerep} now is unbiased zero variance estimator of $F(T)$. However, this estimator depends on $F$ which is not available. Therefore, given a symmetric matrix $X_0$, we approximate $F(t, X_0)$ by a computable matrix function  $\mathcal F(t, X_0)$ that we specify later.
This leads to the unbiased estimator \begin{align}\label{simulated_est_F}
                                                                                                                                                                                                                                        E_{\mathcal F}(T):= x_B(T) x_B^\top(T) - \sum_{i=1}^q \int_0^T \mathcal F\left(T-t, \mathcal L_{N_i}\left(x_B(t)x_B^\top(t)\right) \right) dw_i(t)
                                                                                                                                                                                                                                                    \end{align}                                                                                                                                                                                                                                        for $F(T)$. The hope is that a few samples of $E_{\mathcal F}(T)$ can give an accurate approximation of $F(T)$. Of course, $E_{\mathcal F}(T)$ can only be simulated by further discretizing the above Ito integrals, e.g., by a Riemann-Stieltjes sum approximation. The variance of $E_{\mathcal F}(T)$ is \begin{align*}
      &\mathbb E\Big\|E_{\mathcal F}(T)-F(T)\Big{\|}_F^2 =                                                                                                                                                                                                                                              
  \mathbb E\Big\|\sum_{i=1}^q \int_0^T F\left( T-t, X_i(t)\right) -\mathcal F\left(T-t, X_i(t)\right) dw_i(t)  \Big{\|}_F ^2    \\
  &= \sum_{i, j=1}^q \mathbb E\int_0^T\Big\langle F\left( T-t, X_i(t)\right) -\mathcal F\left(T-t, X_i(t)\right) , F\left( T-t, X_j(t)\right) -\mathcal F\left( T-t, X_j(t)\right) \Big\rangle_F k_{ij} dt  
  \end{align*}
  setting $X_i(t)=  N_i x_B(t)x_B^\top(t) + x_B(t) x_B^\top(t)N_i^\top$  and exploiting Ito's isometry, see \cite{oksendal}. Consequently, the benefit of the variance reduction depends on the difference $F(t, X_0)-\mathcal F(t, X_0)$.   \smallskip
  
  We conclude this section by discussing suitable approximations $\mathcal F(t, X_0)$ of $F(t, X_0)$. For that reason, we establish the following theorem.
  \begin{thm}\label{thm_apprF}
Let $F(t, X_0)$, $t\in [0, T]$, be the solution to \begin{align*}
		{\dot F}(t) = \mathcal L_A\left(F(t)\right)+ \Pi\left(F(t)\right),\quad F(0)=X_0,
	\end{align*}
where the initial data $X_0$ is a symmetric matrix. Then, there exist constants $\underline c$ and $\overline c$ such that \begin{align*}
\expn^{A t} X_0 \expn^{A^\top t} + \underline c\int_0^t \expn^{A s} \Pi\left(I\right) \expn^{A^\top s}  ds \leq F(t) \leq \expn^{A t} X_0 \expn^{A^\top t} + \overline c \int_0^t \expn^{A s} \Pi\left(I\right) \expn^{A^\top s} ds.
	\end{align*}
  \end{thm}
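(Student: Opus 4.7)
The plan is to use the variation-of-constants representation of $F(t)$ in terms of the semigroup generated by $\mathcal L_A$. Treating $\Pi(F(s))$ as an inhomogeneous source, one checks by differentiation that
\begin{equation*}
F(t) = \expn^{At} X_0 \expn^{A^\top t} + \int_0^t \expn^{A(t-s)} \Pi(F(s)) \expn^{A^\top (t-s)} \ds.
\end{equation*}
The task then reduces to sandwiching the integrand between suitable scalar multiples of $\expn^{A(t-s)}\Pi(I)\expn^{A^\top(t-s)}$; after the substitution $u = t-s$, this will give exactly the form stated in the theorem.

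First I would observe that $F$ stays symmetric for all $t\in[0,T]$, since $\mathcal L_A$ and $\Pi$ both preserve symmetry, and that $F$ is continuous (in fact smooth) on $[0,T]$ as the solution of a linear matrix ODE. Consequently the real-valued maps $s\mapsto \lambda_{\min}(F(s))$ and $s\mapsto \lambda_{\max}(F(s))$ are continuous on the compact interval $[0,T]$ and attain finite extrema. I would define
\begin{equation*}
\underline c := \min_{s\in[0,T]} \lambda_{\min}(F(s)),\qquad \overline c := \max_{s\in[0,T]} \lambda_{\max}(F(s)),
\end{equation*}
so that by the spectral theorem $\underline c\, I \leq F(s)\leq \overline c\, I$ in the Loewner order for every $s\in[0,T]$.

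Next I would propagate these scalar bounds through the two order-preserving operations in the Duhamel formula. Linearity of $\Pi$ together with the positivity property recalled in Section \ref{sec:gram} yields $\underline c\, \Pi(I) \leq \Pi(F(s)) \leq \overline c\, \Pi(I)$, and the congruence $Z\mapsto \expn^{A(t-s)}Z\expn^{A^\top(t-s)}$ also preserves the Loewner order (since $Y\geq 0$ implies $MYM^\top\geq 0$ for any matrix $M$). Integrating the resulting inequality in $s$ over $[0,t]$ and performing the change of variable $u=t-s$ then delivers the two-sided bound as stated.

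The main difficulty is more conceptual than computational: one must ensure that the constants $\underline c,\overline c$ can be chosen uniformly in $t$ throughout $[0,T]$. Compactness of $[0,T]$ combined with continuity of $F$ takes care of this. Note that no sign assumption is placed on $X_0$, so $\underline c$ may be negative; the estimate is genuinely two-sided and does not require $X_0\geq 0$. The argument is also robust in the sense that any continuous upper/lower Loewner bounds on $F$ could be plugged in; the specific choice via $\lambda_{\min}, \lambda_{\max}$ is simply the sharpest scalar one.
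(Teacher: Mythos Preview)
Your proposal is correct and follows essentially the same approach as the paper: variation-of-constants representation, uniform spectral bounds $\underline c\,I\leq F(s)\leq \overline c\,I$ via continuity on $[0,T]$, monotonicity of $\Pi$ and of congruence, then integrate and substitute. The only cosmetic differences are that the paper invokes \cite{bhatia97} for continuous eigenvalue parametrizations (whereas you use continuity of $\lambda_{\min},\lambda_{\max}$ directly) and spells out the monotonicity of $\Pi$ via a $\mathbf K^{1/2}$ decomposition rather than citing the positivity property from Section~\ref{sec:gram}.
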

\begin{proof}
Exploiting the product rule, it can be seen that $F$ is implicitly given by \begin{align}\label{mild_sol}
F(t) = \expn^{A t} X_0 \expn^{A^\top t} + \int_0^t \expn^{A (t-s)} \Pi\left(F(s)\right) \expn^{A^\top (t-s)} ds.
	\end{align}
The solution $t\mapsto F(t)$ is continuous and $F(t)$ is a symmetric matrix for all $t\in [0, T]$. Consequently, exploiting \cite[Corollary VI.1.6]{bhatia97}, there exist continuous and real functions $\lambda_1, \ldots, \lambda_n$ such that $\lambda_1(t), \ldots, \lambda_n(t)$ represent the eigenvalues of $F(t)$ for each fixed $t$. We now define continuous functions by $\underline{\lambda} := \min\{\lambda_1, \ldots, \lambda_n\}$ and $\overline{\lambda} := \max\{\lambda_1, \ldots, \lambda_n\}$. Symmetric matrices can be estimated from below and above by their smallest and largest eigenvalue, respectively, leading to $\underline{\lambda}(t)I\leq F(t)\leq \overline{\lambda}(t)I$. Therefore, given an arbitrary vector in $v\in\mathbb R^{n}$, we have {\allowdisplaybreaks \begin{align*}
  v^\top\Pi\left(F(t)\right)v&=\sum_{i,j=1}^q (N_i v)^\top F(t) N_j v k_{ij}=\sum_{i,j=1}^q (N_i v)^\top F(t) N_j v e_i^\top \mathbf K^{\frac{1}{2}} \mathbf K^{\frac{1}{2}} e_j\\&= \sum_{i,j=1}^q 
(N_i v)^\top F(t) N_j v \sum_{k=1}^q \langle \mathbf K^{\frac{1}{2}} e_i, e_k\rangle_2 \langle \mathbf K^{\frac{1}{2}}e_j , e_k\rangle_2\\&= \sum_{k=1}^q 
\Bigg(\sum_{i=1}^q N_i v \langle \mathbf K^{\frac{1}{2}} e_i, e_k\rangle_2\Bigg)^\top F(t) \Bigg(\underbrace{\sum_{j=1}^q N_j v  \langle \mathbf K^{\frac{1}{2}}e_j , e_k\rangle_2}_{=:v_k}\Bigg)\begin{cases}
  \leq \overline{\lambda}(t) \sum_{k=1}^q v_k ^\top  I v_k\\
  \geq   \underline{\lambda}(t)\sum_{k=1}^q v_k^\top  I v_k                                                                                                                                                               \end{cases}
  \end{align*}}
resulting in $\underline{\lambda}(t)\Pi\left(I\right)\leq \Pi\left(F(t)\right)\leq \overline{\lambda}(t)\Pi\left(I\right)$, where $e_i$ is the canonical basis of $\mathbb R^q$.
Since $\underline{\lambda}, \overline{\lambda}$ are continuous on $[0, T]$, they can be bounded from below and above by some suitable constants. Applying this to \eqref{mild_sol}, we obtain the result by substitution.
\end{proof}
Of course, the constants in Theorem \ref{thm_apprF} are generally unknown. However, this result gives us the intuition that $F(t, X_0)$ can be approximated by \begin{align}\label{estimationF}
                                                                                                                                                                \mathcal F(t, X_0)=\expn^{A t} X_0 \expn^{A^\top t} + c \int_0^t \expn^{A s} \Pi\left(I\right) \expn^{A^\top s} ds,
                                                                                                                                                               \end{align}
where $c\in[\underline c, \overline c]$ is a  real number. From the proof of Theorem \ref{thm_apprF}, we further know that $\underline c, \overline c\geq 0$ if $X_0$ is positive semidefinite. We cannot generally expect a reduction of the variance  for all choices of $c$. However, a good candidate will reduce the computational complexity. A general strategy how to find such a candidate is an interesting question for future research.
\begin{remark}\label{rem_complex}
 Besides generating (a few) samples of $x_B$ from \eqref{generatesampleseq}, we require the matrix exponentials $\expn^{A t_i}$ on a grid $0=t_0 < t_1 < \dots < t_{n_g}=T$ to determine the estimator \eqref{simulated_est_F} with $\mathcal F$ as in \eqref{estimationF}. Here, $n_g$ is the number of grid points when discretizing the Ito integral in \eqref{simulated_est_F}. If the points $t_i$ are equidistant with step size $h$, one first  computes $\expn^{A h}$. The other exponentials are then powers of $\expn^{A h}$ such that a certain number of matrix multiplications (depending on $n_g$) have to be conducted.
\end{remark}
The Gramian $Q_T$ can be computed from \eqref{gengenlyapobs} requiring to determine  $G(T)$. 
According to Remark \ref{remcomQ}, we know that $G(T) = \mathbb E [x_C(T) x_C^\top(T)]$, where
\begin{align*}
	dx_C(t) = A^\top x_C(t) dt+\sum_{i=1}^q N_i^\top x_C(t) dw_i(t),\quad x_C(0)=C^\top,                                                                                                                                                                                                                                                                                                                                                                                                                                                                                                                                                                                                                                                                                                     \end{align*}
with $x_C(t)\in \mathbb R^{n\times p}$. Exploiting the above consideration regarding $F(T)$, we can see that \begin{align}\label{simulated_est_G}                                                                                                                                                                                                                            E_{\mathcal G}(T):= x_C(T) x_C^\top(T) - \sum_{i=1}^q \int_0^T \mathcal G\left(T-t, \mathcal L_{N_i}^*\left(x_C(t)x_C^\top(t)\right) \right) dw_i(t)
                                                                                                                                                                                                                                                    \end{align}  
is a possible unbiased estimator for $G(T)$. The approximation $\mathcal G$ of $G$ can be chosen as in \eqref{estimationF} replacing $(A, N_i)\mapsto (A^\top, N_i^\top)$.

\subsection{Gramians based on deterministic approximations of $F(T)$ and $G(T)$}\label{approxFG_Gram}

Based on Theorem \ref{thm_apprF}, an estimation of $F(T)$ (and also $G(T)$) is given in \eqref{estimationF}. Instead of using these approximations in a variance reduction procedure like in Section \ref{sampled_Gram}, we exploit it directly in \eqref{comPT} and \eqref{gengenlyapobs}. This leads to matrices $\mathcal P_T$ and $\mathcal Q_T$ solving \begin{align}
		\mathcal F(T, BB^\top)-BB^\top &= 
		\mathcal L_A\left(\mathcal P_T\right) + \Pi\left(\mathcal P_T\right),\\ 
    \mathcal G(T, C^\top C)-C^\top C &= 
		\mathcal L_A^*\left(\mathcal Q_T\right) + \Pi^*\left(\mathcal Q_T\right),
	\end{align}
 where the left hand sides are defined by 
 \begin{align}     \label{calP}                                                                                                                                                           \mathcal F(T, BB^\top)&=\expn^{A T} BB^\top\expn^{A^\top T} + c_F \int_0^T \expn^{A s} \Pi\left(I\right) \expn^{A^\top s} ds,\quad c_F\in\mathbb R,\\ \label{calQ}
\mathcal G(T, C^\top C)&=\expn^{A^\top T} C^\top C \expn^{A T} + c_G \int_0^T \expn^{A^\top s} \Pi^*\left(I\right) \expn^{A s} ds, \quad c_G\in\mathbb R.                                                                                                                                                              \end{align}
Certainly, the choice of the constants $c_F$ and $c_G$ determine how well $P_T$ and $Q_T$ are approximated by $\mathcal P_T$ and $\mathcal Q_T$, e.g., in terms of the characterization of the respective dominant subspaces of system \eqref{original_system}. Notice that for $N_i=0$, $\mathcal F(T, BB^\top)$ and $\mathcal G(T, C^\top C)$ yield the exact values for $F(T, BB^\top)$ and $G(T, C^\top C)$. At this point, it is important to mention 
that the Gramian approximation of this section is computationally less complex than the one in Section \ref{sampled_Gram}. First of all, we do not need to sample from \eqref{generatesampleseq} and secondly no Ito integral as in \eqref{simulated_est_F} has to be discretized. Calculating $\mathcal F$ and $\mathcal G$ might also require to compute matrix exponentials on a partition of $[0, T]$, compare with Remark \ref{rem_complex}. However, less grid points than for the sampled Gramians of Section \ref{sampled_Gram} have to be considered since an ordinary integral can be discretized with a larger step size compared to an Ito integral. Alternatively, the integrals in \eqref{calP} and \eqref{calQ} can also be determined without a discretization since it holds that \begin{align*}
\mathcal L_A\bigg(\int_0^T \expn^{A s} \Pi\left(I\right) \expn^{A^\top s} ds\bigg) &= - \Pi\left(I\right)  +\expn^{A T} \Pi\left(I\right) \expn^{A^\top T},\\
\mathcal L_A^*\bigg(\int_0^T \expn^{A^\top s} \Pi^*\left(I\right) \expn^{A s} ds\bigg) &= - \Pi^*\left(I\right)  +\expn^{A^\top T} \Pi^*\left(I\right) \expn^{A T}.                                                                                                                                                                                                                                                                                                                                                                                                                                                                                                                                                                                                                                                                                                                                                                                                                                                                                                                                                                                                                                                     \end{align*}
This approach has the advantage that only the matrix exponential $\expn^{A T}$ at the terminal time is needed.

\section{Numerical experiments}\label{numerics section}
In order to indicate the benefit of the model reduction method presented in Section \ref{sec_mor}, we consider a linear controlled SPDE as in \eqref{heateq}. In addition, we emphasize the applicability to unstable systems by rescaling and shifting the Laplacian. The concrete example of interest is \begin{align*}
			\frac{\partial {\mathcal X}(t,\zeta)}{\partial t}&=\left(\alpha\Delta+\beta I\right) {\mathcal X}(t,\zeta)+1_{[\frac{\pi}{4},\frac{3\pi}{4}]^2}(\zeta)u(t)+\gamma\expn^{-|\zeta_1-\frac{\pi}{2}|-\zeta_2}{\mathcal X}(t,\zeta)\frac{\partial w(t)}{\partial t},\quad t\in[0,1],\quad \zeta\in[0,\pi]^2,\\ 
			{\mathcal X}(t,\zeta)&=0,\quad t\in[0,1],\quad \zeta\in\partial[0,\pi]^2,\quad\text{and}\quad \mathcal X(0,\zeta) \equiv 0,
		\end{align*}
	where $\alpha, \beta>0$, $\gamma\in\mathbb R$ and $w$ is an one-dimensional Wiener process. ${\mathcal X}(t,\cdot)$, $t\in [0, T]$, is interpreted as a process taking values in $H=L^2([0,\pi]^2)$. The input operator $\mathcal B$ in \eqref{heateq} is characterized by $1_{[\frac{\pi}{4},\frac{3\pi}{4}]^2}(\cdot)$ and the noise operator $\mathcal N_1= \mathcal N$ is defined trough $ \mathcal N\mathcal X =\expn^{-|\cdot-\frac{\pi}{2}|-\cdot}\mathcal X$ for $ \mathcal X\in L^2([0,\pi]^2) $. Since the Dirichlet Laplacian generates a $C_0$-semigroup and its eigenfunctions $(h_k)_{k\in\mathbb N}$ represent a basis of $H$, the same is true for  $\alpha \Delta+\beta I$. Therefore, we interpret the solution of the above SPDE in the mild sense. For more information to SPDEs and the mild solution concept, we refer to \cite{dapratozab}. The quantity of interest is the average temperature on the non controlled area, i.e., 
	\[\mathcal Y(t)= \mathcal C \mathcal X(t, \cdot):=\frac{4}{3\pi^2}\int_{[0,\pi]^2\setminus[\frac{\pi}{4},\frac{3\pi}{4}]^2}\mathcal X(t,\zeta)d\zeta.\]
In order to solve this SPDE numerically, a spatial discretization can be considered as a first step. Here, we choose a spectral Galerkin method relying on the global basis of eigenfunctions $(h_k)_{k\in\mathbb N}$. The idea is to construct an approximation $\mathcal X_n$ to $\mathcal X$ taking values in the subspace $H_n =\spaned \{h_1,\cdots , h_n\}$ and which converges to the SPDE solution with $n\rightarrow \infty$. For more detailed information on this discretization scheme, we refer to \cite{galerkinhaus}. The vector of Fourier coefficients $x(t)= \left( \langle \mathcal X_n(t),h_1\rangle_H, \cdots, \langle \mathcal X_n(t),h_n\rangle_H\right)^\top$ is a solution of a system like \eqref{original_system} with $q=1$ and discretized operators
\begin{itemize}
	\item $ {A}=\alpha\diag(-\lambda_1,\cdots,-\lambda_n) +\beta I$,\quad $ {B}=\left( \langle \mathcal B,h_k\rangle_H\right)_{k=1 \cdots n}$, \quad ${C}=\left(\mathcal C h_k\right)_{
	k=1\cdots n}$, 
	\item $ {N}_1=\left( \langle \mathcal N h_i ,h_k\rangle_H\right)_{k,i=1 \cdots n} $\quad and\quad $ x_0=0$,
\end{itemize}		
where $(-\lambda_k)_{k\in\mathbb N}$ are the ordered eigenvalues of $\Delta$. We refer to \cite{redmannbenner}, where a similar example was studied. There, more details are provided on how this system with its matrices is derived. Now, a small $\alpha$ and a larger $\beta$ yield an unstable $A$, i.e., $\sigma(A)\not\subset \mathbb C$ which already violates asymptotic mean square stability of \eqref{original_system}, i.e., $\mathbb E\left\|x(t; x_0, 0)\right\|_2^2\nrightarrow 0$ as $t\rightarrow \infty$ . Moreover, a larger $\gamma$ (larger noise) causes further instabilities. For that reason, we pick $\alpha= 0.4$, $\beta=3$ and $\gamma = 2$ in order to demonstrate the MOR procedure for a relatively unstable system. Notice that enlarging $\beta$ or $\gamma$ (or making $\alpha$ smaller) leads to a higher degree of instability. This affects the approximation quality in the reduced system given $T$ is fixed. The intuition is that the less stable a system is the stronger the dominant subspaces are expanding in time. This is because some variables in unstable systems are strongly growing such that initially redundant directions become more relevant from a certain point of time. This can also be observed in numerical experiments. \smallskip

Below, we fix a normalized control $u(t)= c_u \expn^{-0.1 t}$, $t\in [0, T]$, (the constant $c_u$ ensures $\left\|u\right\|_{L^2_T}=1$) and apply the MOR method to the spatially discretized SPDE that is based on the balancing transformation $S=S_2$ described in Section \ref{sec_diagonaling}. In Section \ref{exp_n100}, we compare the approximation quality of the ROMs using either the exact Gramian or inexact Gramians introduced in Section \ref{sec_compGram}. Subsequently, Section \ref{exp_n1000} shows the reduced model accuracy in higher state space dimension, where solely inexact Gramians are available. We conclude the numerical experiments by discussing the impact of the terminal time $T$ and the covariance matrix $K$ in Section \ref{different_parameter}.

\subsection{Simulations for $n=100$ and $T=1$}\label{exp_n100}
We compare the associated ROM \eqref{reduced_system} with the original system in dimension $n=100$ first since this choice allows to determine $F(T), G(T)$ and hence the Gramians $P_T, Q_T$ exactly according to Section \ref{exact_Gram}. As a consequence, we can compare the MOR scheme involving the exact Gramians with the same type of scheme relying on the approximated Gramians that are computed exploiting the approaches in Sections \ref{sampled_Gram} and \ref{approxFG_Gram}. In particular, we first approximate $F(T)$ and $G(T)$ based on a Monte-Carlo simulation using $10$ realizations of the estimators \eqref{simulated_est_F} and \eqref{simulated_est_G}, respectively. The functions $\mathcal F$ and $\mathcal G$ entering these estimators are chosen as in \eqref{estimationF} with $c=0$. We refer to the resulting matrices as the Section \ref{sampled_Gram} Gramians. At this point, we want to emphasize that these sampling based Gramians do not necessarily have to be accurate approximations of the exact Gramians in a component-wise sense. It is more important that the dominant subspaces of the system (eigenspaces of the Gramians) are captured in the approximation. Notice that the dominant subspace characterization is not improved if the number of samples is enlarged to $1000$. 
Secondly, we determine the approximations $\mathcal P_T$ and $\mathcal{Q}_T$ according to Section \ref{approxFG_Gram} and call them Section \ref{approxFG_Gram} Gramians. The associated constants are chosen to be $c_F=c_G=0$.\smallskip

In Figure \ref{hsvplot}, the HSVs $\sigma_{T, i}$, $i=\{1, \ldots, 50\}$, of system \eqref{original_system} are displayed. By Theorem \ref{thm_error_bound} and the explanations below this theorem, it is known that small truncated $\sigma_{T, i}$ go along with a small reduction error of the MOR scheme. Due to the rapid decay of these values, we can therefore conclude that small error can already be achieved for small reduced dimensions $r$. For instance, we observe that $\sigma_{T, i}<3.5$e$-06$ for $i\geq 8$ indicating a very high accuracy in the  ROM for $r\geq 7$. This is confirmed by the error plot in Figure \ref{errorK0} and the second column of Table \ref{table_computation2}. Moreover, Figure \ref{errorK0} shows the tightness of the error bound in \eqref{error_1} that was specified in Theorem \ref{thm_error_bound}. The bound differs from the exact error only by a factor between $2.5$ and $4.6$ for the reduced dimensions considered in Figure \ref{errorK0} and is hence a good indicator for the expected performance. Notice that the error is only exact up to deviations occurring due to the semi-implicit Euler-Maruyama discretization of \eqref{original_system} and \eqref{reduced_system} as well as the Monte-Carlo approximation of the expected value using $10\,000$ paths. Besides the MOR error based on $P_T$ and $Q_T$, Table \ref{table_computation2} states the errors in case the approximating Gramians of Sections \ref{sampled_Gram} and  \ref{approxFG_Gram} are used. It can be seen that both approximations perform roughly the same and that one looses an order of accuracy compared to the exact Gramian approach. However, one can lower  the  reduction error by an optimization with respect to the constants $c, c_F, c_G$. Moreover, we see that the accuracy is very good for the estimators of the covariances $F(T)$ and $G(T)$ used here.

\begin{figure}[ht]
\begin{minipage}{0.48\linewidth}
 \hspace{-0.5cm}
\includegraphics[width=0.9\textwidth,height=6cm]{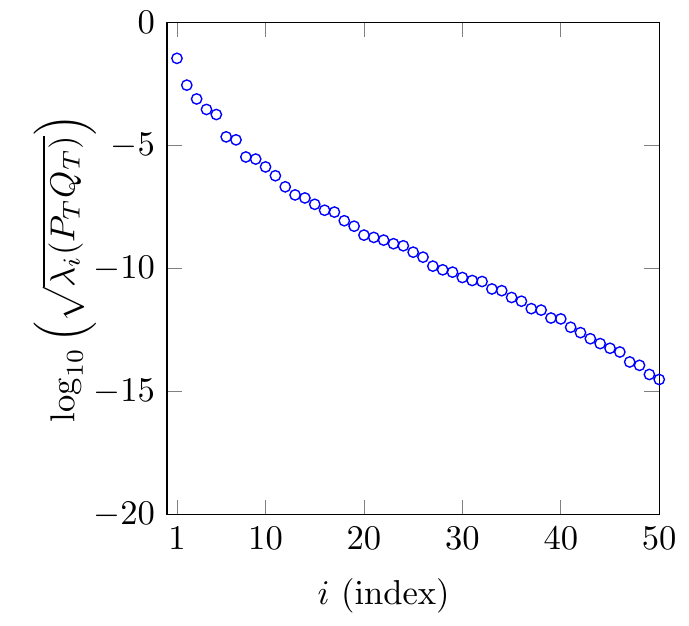}
\caption{Decay of first $50$ logarithmic HSVs of system \eqref{original_system} based on time-limited Gramians $P_T$ and $Q_T$.}\label{hsvplot}
\end{minipage}\hspace{0.5cm}
\begin{minipage}{0.48\linewidth}
 \hspace{-0.5cm}
\includegraphics[width=0.9\textwidth,height=6cm]{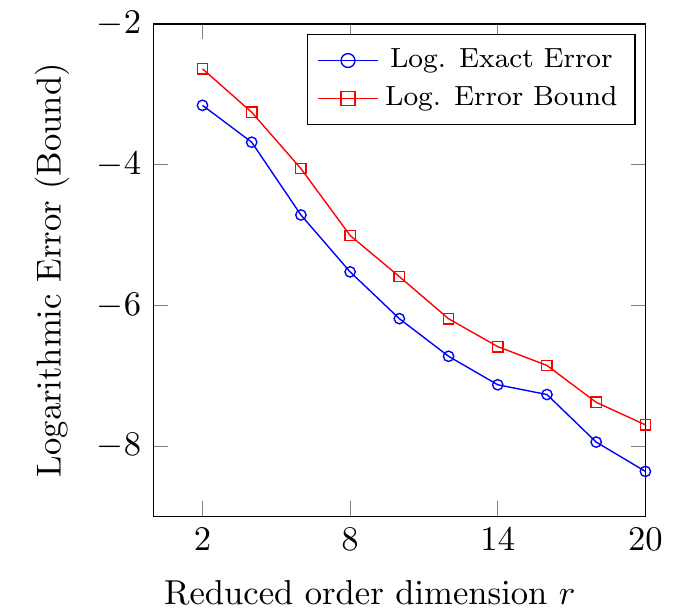}
\caption{$\log_{10}\big(\sup_{t\in[0,1]}\mathbb{E}\|y(t)-\bar{y}(t)\|_2\big)$ and logarithmic bound in \eqref{error_1} for $r\in\{2, 4, 6, 8, 10, 12, 14, 16, 18, 20\}$.}\label{errorK0}
\end{minipage}
\end{figure}

\begin{table}[th]
\centering
\begin{tabular}{|c||c|c|c|}\hline
& \multicolumn{3}{c}{Error $\sup_{t\in[0,1]}\mathbb{E}\|y(t)-\bar{y}(t)\|_2$ of MOR using } \vline \\
\hline 
Reduced dimension $r$ & exact Gramians $P_T, Q_T$ &  Section \ref{sampled_Gram} Gramians &  Section \ref{approxFG_Gram} Gramians\\
\hline
\hline
$2$ & $7.00$e$-04$& $2.61$e$-03$ & $1.75$e$-03$\\
$4$ &  $2.09$e$-04$ & $1.82$e$-03$ & $8.61$e$-04$\\
$8$ & $2.99$e$-06$ & $2.63$e$-05$ & $4.51$e$-05$ \\
$16$ & $5.38$e$-08$ & $1.31$e$-06$ &  $1.55$e$-06$ \\
\hline
\end{tabular}\caption{Error between the output $y$ of \eqref{original_system} with $n=100$ and the reduced output $\bar y$ of \eqref{reduced_system} using different Gramians to compute the balancing transformation $S=S_2$.}
\label{table_computation2}
\end{table}

\subsection{Simulations for $n=1000$ and $T=1$}\label{exp_n1000}
We repeat the simulations of Subsection \ref{exp_n100} for $n=1000$. This is a scenario, where the exact Gramians are not available anymore. Therefore, we conduct the balancing MOR scheme using the Sections \ref{sampled_Gram} and \ref{approxFG_Gram} Gramians only. In the context of the Section \ref{sampled_Gram} Gramians, it is important to mention that in higher dimensions it is required to use very efficient discretizations of the Ito integrals in \eqref{simulated_est_F} and \eqref{simulated_est_G}. Otherwise, a very small step size is needed such that from the computational point of view it is better to omit these Ito integrals within the estimators, i.e., just $x_B$ and $x_C$ are supposed to be sampled to approximate $F(T)$ and $G(T)$. Table \ref{table2} shows that the balancing related MOR technique based on the approximated Gramians of Sections \ref{sampled_Gram} and \ref{approxFG_Gram} is beneficial in high dimensions. A very small reduction error can be observed and in the majority of the cases the sampling based approach seems slightly more accurate than the approach of Section \ref{approxFG_Gram} given the same type of approximations for $F(T)$ and $G(T)$ for each ansatz.
\begin{table}[th]
\centering
\begin{tabular}{|c||c|c|}\hline
& \multicolumn{2}{c}{Error $\sup_{t\in[0,1]}\mathbb{E}\|y(t)-\bar{y}(t)\|_2$ of MOR using } \vline \\
\hline 
Reduced dimension $r$ &  Section \ref{sampled_Gram} Gramians &  Section \ref{approxFG_Gram} Gramians\\
\hline
\hline
$2$ & $1.43$e$-03$ & $1.72$e$-03$\\
$4$ &   $2.07$e$-03$ & $8.57$e$-04$\\
$8$ & $5.18$e$-05$ & $9.26$e$-05$ \\
$16$ & $2.13$e$-06$ &  $4.88$e$-06$ \\
\hline
\end{tabular}\caption{Error between the output $y$ of \eqref{original_system} with $n=1000$ and the reduced output $\bar y$ of \eqref{reduced_system} using Sections \ref{sampled_Gram} and \ref{approxFG_Gram} Gramians to compute the balancing transformation $S=S_2$.}
\label{table2}
\end{table}

\subsection{Relevance of $T$ and $K$}\label{different_parameter}

As in Section \ref{exp_n100}, let us fix $n=100$ to be able to compute the Gramians exactly. We begin with deriving reduced systems on different intervals $[0, T]$. Secondly, we extend our model to a stochastic differential equation with noise dimension $q=2$ and investigate the effect of different correlations between the two Wiener processes.

\paragraph{Relevance of the terminal time}

Let us study the scenario of Section \ref{exp_n100} with $T= 0.5, 1, 2, 3$ using the exact Gramians to illustrate that dominant subspaces are changing in time. Indeed, we observe in Table \ref{table_computation_difT} that for a fixed reduced dimension $r$ the error gets bigger the larger the interval $[0, T]$ is. This means that with increasing $T$ the reduced dimension has to be enlarged to ensure a certain desired approximation error. This is also intuitive in the sense that it is generally harder to find a good approximation on a larger interval in comparison to a smaller one.
\begin{table}[th]
\centering
\begin{tabular}{|c||c|c|c|c|}\hline
& \multicolumn{4}{c}{Error $\sup_{t\in[0, T]}\mathbb{E}\|y(t)-\bar{y}(t)\|_2$ of MOR for } \vline \\
\hline 
Reduced dimension $r$ & $T=0.5$ &  $T=1$ &  $T=2$ & $T=3$\\
\hline
\hline
$2$ & $3.98$e$-04$& $7.00$e$-04$ & $2.17$e$-02$ &$3.13$e$-02$\\
$4$ & $1.46$e$-05$&  $2.09$e$-04$ &  $2.86$e$-04$ &$6.86$e$-04$\\
$8$ & $2.82$e$-07$&$2.99$e$-06$ &  $7.80$e$-06$& $2.23$e$-05$ \\
$16$ & $5.46$e$-09$&$5.38$e$-08$ &   $1.12$e$-07$& $2.90$e$-07$ \\
\hline
\end{tabular}\caption{Error between the output $y$ of \eqref{original_system} and the reduced output $\bar y$ of \eqref{reduced_system} using the exact Gramians:  $n=100$,  $S=S_2$ and $T=0.5, 1, 2, 3$.}
\label{table_computation_difT}
\end{table}

\paragraph{Relevance the the covariance structure}

Let us extend the SPDE discretization by introducing $N_2:= N_1^{\frac{6}{5}}$ so that we have a system of the form \eqref{original_system} with $q=2$ and standard Wiener processes $w_1$ and $w_2$. The goal is to investigate how the correlation between $w_1$ and $w_2$ influences the MOR error. For that reason, we choose the following three scenarios: $\mathbb E[w_1(t) w_2(t)] = \rho t$ with $\rho = 0, 0.5, 1$. Table \ref{table_computation_difK} states the MOR errors for these correlations. In this example, we can observe that a higher correlation between the processes yields a larger error. A different observation was made in numerical examples studied in \cite{mor_heston}, where systems with high correlations in the noise processes gave a smaller reduction error. However, \cite{mor_heston} studies  different types of stochastic differential equations  in the context of asset price models which do not have control inputs.
\begin{table}[th]
\centering
\begin{tabular}{|c||c|c|c|}\hline
& \multicolumn{3}{c}{Error $\sup_{t\in[0,1]}\mathbb{E}\|y(t)-\bar{y}(t)\|_2$ of MOR for } \vline \\
\hline 
Reduced dimension $r$&\textcolor{white}{fffff}$\rho=0$ \textcolor{white}{fffff}&$\textcolor{white}{fff}\rho=0.5$\textcolor{white}{fff}&$\textcolor{white}{ffff}\rho=1$\textcolor{white}{ffff}\\
\hline
\hline
$2$ & $1.10$e$-03$& $1.43$e$-03$ & $1.79$e$-03$\\
$4$ &  $2.44$e$-04$ & $2.34$e$-04$ & $3.24$e$-04$\\
$8$ & $5.71$e$-06$ & $8.95$e$-06$ & $1.34$e$-05$ \\
$16$ & $1.64$e$-07$ & $2.37$e$-07$ &  $3.36$e$-07$ \\
\hline
\end{tabular}\caption{Error between the output $y$ of \eqref{original_system} and the reduced output $\bar y$ of \eqref{reduced_system} using the exact Gramians:  $n=100$,  $S=S_2$, $T=1$, $q=2$ and different correlations $\rho=0, 0.5, 1$.}
\label{table_computation_difK}
\end{table}

\bibliographystyle{plain}

\end{document}